\newcommand{\As}[1][]{A_s #1}
\newcommand{\Bs}[1][]{B_s #1}
\newcommand{\bgset}[1]{\big\{#1\big\}}
\newcommand{\F}{{\mathcal F}}
\newcommand{\M}{{\mathcal M}}
\newcommand{\norm}[2][]{\left\|#2\right\|_{#1}}
\renewcommand{\O}{\text{O}}
\newcommand{\PS}[1]{$(\text{PS})_{#1}$}
\newcommand{\R}{\mathbb R}
\newcommand{\restr}[2]{\left.#1\right|_{#2}}
\newcommand{\seq}[1]{\left(#1\right)}
\newcommand{\set}[1]{\left\{#1\right\}}
\newcommand{\wto}{\rightharpoonup}
\newcommand{\Z}{\mathbb Z}
\newenvironment{enumroman}{\begin{enumerate}

}{\end{enumerate}}
\newtheorem{lemma}{Lemma}[section]
\newtheorem{proposition}[lemma]{Proposition}
\newtheorem{theorem}[lemma]{Theorem}
\theoremstyle{definition}
\newtheorem{definition}[lemma]{Definition}
\theoremstyle{remark}
\newtheorem{remark}[lemma]{Remark}
\numberwithin{equation}{section}
\title{On a scaled abstract linking theorem with an application to the Schr\"{o}dinger--Poisson--Slater equation\bf\thanks{{\em MSC2020:} Primary 58E05, Secondary 35A15, 49J27, 58E07
\newline \indent\; {\em Key Words and Phrases:} scaled problems, existence, multiplicity, linking theorem, Schr\"{o}dinger--Poisson--Slater equation}}
\author{\bf Kanishka Perera\\
Department of Mathematics\\
Florida Institute of Technology\\
150 W University Blvd, Melbourne, FL 32901-6975, USA\\
\em kperera@fit.edu\\
[\medskipamount]
\bf Kaye Silva\\
Instituto de Matem\'{a}tica e Estat\'{i}stica\\
Universidade Federal de Goi\'{a}s\\
Rua Samambaia, 74001-970 Goi\^{a}nia, GO, Brazil\\
\em kayesilva@ufg.br}
\date{}
\begin{document}

\maketitle

\begin{abstract} We prove an abstract linking theorem that can be used to show existence of solutions to various types of variational elliptic equations, including  Schr\"{o}dinger--Poisson--Slater type equations.

\end{abstract}

\begin{center}
	\begin{minipage}{12cm}
		\tableofcontents
	\end{minipage}
\end{center}

\newpage

\section{Introduction}
In this work we will develop an abstract critical point theory that can be used to show existence of pairs of solutions to various types of variational elliptic differential equations. Indeed, the theory can be applied, for example, to equations involving the $p$-Laplacian, Kirchhoff, or fractional $p$-Laplacian operators. However, our main application will concern some types of Schr\"{o}dinger--Poisson--Slater equations which we will describe in the next section.   

\subsection{Schr\"{o}dinger--Poisson--Slater Equation} \label{SPSsection}

Consider the following $N$-dimensional Schr\"{o}dinger--Poisson--Slater equation:

\begin{equation} \label{103}
	- \Delta u +\left(I_\alpha \star |u|^p\right) |u|^{p-2}u = \lambda\, |u|^{q-2} u + \mu f(u)+ |u|^{r-2}\, u \quad \text{in } \R^N,
\end{equation}
where $N\ge 3$,  $\alpha\in (1,N)$, $1< p< (N+\alpha)/(N-2)$, $q=q(\alpha,p,N):=2\frac{2p+\alpha}{2+\alpha}$, $r\in (q,2^*)$, $2^*=2N/(N-2)$, $\lambda,\mu>0$ are real parameters and $I_\alpha:\mathbb{R}^N\to \mathbb{R}$ is the Riesz potential of order $\alpha$, defined for $x\in \mathbb{R}^N\setminus\{0\}$ as 
\begin{equation*}
	I_\alpha(x)=\frac{A_\alpha}{|x|^{N-\alpha}},\ \ \ A_\alpha=\frac{\Gamma(\frac{N-\alpha}{2})}{\Gamma(\frac{\alpha}{2})\pi^{N/2}2^\alpha}.
\end{equation*}
The function $f:\mathbb{R}\to \mathbb{R}$ is continuous and satisfies 

\begin{equation}\label{f1}
	|f(t)|\le C|t|^\ell,\ t\in \mathbb{R}\ \ \   \mbox{and}\ \ \ \lim_{t\to 0}\frac{F(t)}{|t|^\ell}=c>0,
\end{equation}
where $F(t)=\int_0^tf(y)dy$, $\ell\in (\tilde{q},q)$, and $\tilde{q}=\tilde{q}(\alpha,p,N):=\frac{2[2p(N-1)+N-\alpha]}{3N+\alpha-4}$. We will also assume
\begin{equation}\label{f2}
	F(t)>0\ \forall t\in \mathbb{R}\setminus\{0\}.
\end{equation}

When $p=\alpha=2$ and $N=3$, equation \eqref{103} has the form 

\begin{equation} \label{10}
	- \Delta u + \left(I_2\star u^2\right) u = h(u) \quad \text{in } \R^3.
\end{equation}
This equation is related to the Thomas--Fermi--Dirac--von\;Weizs\"acker model in Density Functional Theory (DFT), where the local nonlinearity $h$ takes the form $h(u) = u^{5/3} - u^{7/3}$ (see, e.g., Lieb \cite{MR629207,MR641371}, Le Bris and Lions \cite{MR2149087}, Lu and Otto \cite{MR3251907}, Frank et al.\! \cite{MR3762278}, and references therein). More general local nonlinearities also arise in DFT and quantum chemistry models such as Kohn-Sham's, where $h$ is the so-called exchange-correlation potential and is not explicitly known (see, e.g., Anantharaman and Canc\`es \cite{MR2569902} and references therein). 

The $N$-dimensional version of \eqref{10} was proposed by Bao et al. \cite{BaMaSt}. More recently, in Mercuri et al. \cite{MR3568051}, a thorough study of 

\begin{equation*} 
	- \Delta u +\left(I_\alpha \star |u|^p\right) |u|^{p-2}u =  |u|^{r-2} u\ \text{in } \R^N
\end{equation*}
was carried out (see also Lions \cite{MR636734} and Ruiz \cite{MR2679375}).

We look for solutions of \eqref{103} in the space $E_{rad}^{\alpha,p}(\mathbb{R}^N)$, whose definition is as follows (see \cite[Section 2]{MR3568051}). The Coulomb space $\mathcal{Q}^{\alpha,p}(\mathbb{R}^N)$ is the vector space of measurable functions $u:\mathbb{R}^N\to \mathbb{R}$ such that
\begin{equation*}
	\|u\|_{\mathcal{Q}^{\alpha,p}}=\left(\int_{\mathbb{R}^N} |I_{\alpha/2}\star |u|^p|^2dx \right)^{\frac{1}{2p}}<\infty.
\end{equation*}
The Coulomb-Sobolev space $E^{\alpha,p}(\mathbb{R}^N)$ is the subspace of functions $u\in \mathcal{Q}^{\alpha,p}(\mathbb{R}^N)$ such that $u$ is weakly differentiable in $\mathbb{R}^N$, $Du\in L^2(\mathbb{R}^N)$, and 

\begin{equation}\label{normsps}
	\|u\|=\left(\int_{\mathbb{R}^N} |\nabla u|^2dx+\left(\int_{\mathbb{R}^N} |I_{\alpha/2}\star |u|^p|^2dx\right)^{1/p}\right)^{1/2}<\infty. 
\end{equation}
The space $E^{\alpha,p}(\mathbb{R}^N)$ is a uniformly convex, Banach space with respect to the norm $\|u\|$ (see \cite[Proposition 2.2 and Proposition 2.9]{MR3568051}). We define $E_{rad}^{\alpha,p}(\mathbb{R}^N)$ to be the subspace of radial functions in $E^{\alpha,p}(\mathbb{R}^N)$. Since it is a closed subspace, it is also a uniformly convex, Banach space with respect to the norm $\|u\|$.  

We note here that  $q$ corresponds to the smaller exponent for which the continuous embedding $E^{\alpha,p}(\mathbb{R}^N)\hookrightarrow L^\ell(\mathbb{R}^N)$ holds true with $\ell\in [q,2^*]$ (see \cite[Theorem 1]{MR3568051}), while  $\tilde{q}$ corresponds to the smaller exponent for which the continuous embedding $E_{rad}^{\alpha,p}(\mathbb{R}^N)\hookrightarrow L^\ell(\mathbb{R}^N)$ with $\ell\in(\tilde{q},2^*]$ holds true (see \cite[Theorem 4]{MR3568051}).

Under these assumptions solutions of \eqref{103} are critical points of the $C^1$ functional $\Phi:E_{rad}^{\alpha,p}(\mathbb{R}^N)\to \mathbb{R}$ given by

\begin{equation}\label{functionaSPS}
	\Phi(u)=I(u)-\lambda J(u)-\mu\int_{\mathbb{R}^N} F(x,u)dx-\frac{1}{r}\int_{\mathbb{R}^N} |u|^rdx,\ u\in E_{rad}^{\alpha,p}(\mathbb{R}^N),
\end{equation}
where
\begin{equation*}
	I(u)=\frac{1}{2}\int_{\mathbb{R}^N} |\nabla u|^2dx+\frac{1}{2p}\int_{\mathbb{R}^N} |I_{\alpha/2}\star |u|^p|^2dx,\ u\in E_{rad}^{\alpha,p}(\mathbb{R}^N)
\end{equation*}
and
\begin{equation*}
	J(u)=\frac{1}{q}\int_{\mathbb{R}^N} |u|^qdx,\ u\in E_{rad}^{\alpha,p}(\mathbb{R}^N).
\end{equation*}

As we shall see the geometry of the functional $\Phi$ depends on the parameter $\lambda>0$. In fact, by introducing the scaling, that appears naturally in this context (see \cite{MR3568051}), $u_t(x)=t^\sigma u(tx)$, $u\in E_{rad}^{\alpha,p}(\mathbb{R}^N)\setminus\{0\}$, $t>0$, where
\begin{equation*}
	\sigma = \frac{2+\alpha}{2(p-1)},
\end{equation*}
one can easily see that $I(u_t)=t^sI(u)$ and $J(u_t)=t^sJ(u)$ where $s=\sigma q-N$. Therefore the behavior of $I(u)-\lambda J(u)$ near the origin will be determined by the following sequence of scaled eigenvalues introduced in \cite{MePe2}: let $\M=\{u\in E_{rad}^{\alpha,p}(\mathbb{R}^N): I(u)=1\}$, $\F$ denote the class of symmetric subsets of $\M$ and let $i(M)$ denote the $\Z_2$-cohomological index of $M \in \F$ (see Fadell and Rabinowitz \cite{MR0478189}). For $k \ge 1$, let $\F_k = \bgset{M \in \F : i(M) \ge k}$ and set
\begin{equation} \label{5}
	\lambda_k := \inf_{M \in \F_k}\, \sup_{u \in M}\,  \widetilde{\Psi}(u),
\end{equation}
where $ \widetilde{\Psi}(u)=1/J(u)$. Then as an application of Theorem \ref{Theorem 7} (see next section) we have the following
\begin{theorem} \label{thmeigen}
	$\lambda_k \nearrow \infty$ is a sequence of scaled eigenvalues of the problem $I'(u)=\lambda J'(u)$.
	\begin{enumroman}
		\item The first eigenvalue is given by
		\[
		\lambda_1 = \min_{u \in \M_s}\, \widetilde{\Psi}(u) > 0.
		\]
		\item If $\lambda_k = \dotsb = \lambda_{k+m-1} = \lambda$ and $E_\lambda$ is the set of eigenfunctions associated with $\lambda$ that lie on $\M_s$, then $i(E_\lambda) \ge m$.
		\item If $\lambda_k < \lambda < \lambda_{k+1}$, then
		\[
		i(\widetilde{\Psi}^{\lambda_k}) = i(\M_s \setminus \widetilde{\Psi}_\lambda) = i(\widetilde{\Psi}^\lambda) = i(\M_s \setminus \widetilde{\Psi}_{\lambda_{k+1}}) = k,
		\]
		where $\widetilde{\Psi}^a = \bgset{u \in \M_s : \widetilde{\Psi}(u) \le a}$ and $\widetilde{\Psi}_a = \bgset{u \in \M_s : \widetilde{\Psi}(u) \ge a}$ for $a \in \R$.
	\end{enumroman}
\end{theorem}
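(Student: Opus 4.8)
The plan is to derive Theorem~\ref{thmeigen} from the abstract scaled eigenvalue machinery of Theorem~\ref{Theorem 7} (stated in the next section), exactly as announced in the excerpt, by checking that the concrete functionals $I$ and $J$ on $E_{rad}^{\alpha,p}(\R^N)$ satisfy the structural hypotheses of that abstract result. Concretely, I would first record that $I$ is positive, $s$-homogeneous under the scaling $u \mapsto u_t = t^\sigma u(t\,\cdot)$ with $s = \sigma q - N > 0$, even, and $C^1$, and that the constraint manifold $\M = \{I(u)=1\}$ (resp.\ its scaled version $\M_s$) is a complete symmetric $C^1$-Banach manifold; likewise $J$ is positive on $\M$, even, $s$-homogeneous under the same scaling, weakly continuous on $E_{rad}^{\alpha,p}$ thanks to the \emph{compact} radial embeddings $E_{rad}^{\alpha,p}(\R^N) \hookrightarrow L^\ell(\R^N)$ for $\ell \in (\tilde q, 2^*)$ (here $q \in (\tilde q, 2^*)$ so the $L^q$ embedding is compact), and $C^1$ with $J' $ compact. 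The weak continuity of $J$ and compactness of $J'$ are what make $\widetilde\Psi = 1/J$ satisfy the Palais--Smale condition on $\M_s$, which is the key compactness input of the abstract theorem.

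Next I would verify the Palais--Smale condition for $\widetilde\Psi$ restricted to $\M_s$: a $(\mathrm{PS})_c$ sequence $(u_n)$ with $\widetilde\Psi(u_n)\to c$ and $\widetilde\Psi'|_{\M_s}(u_n)\to 0$ is bounded in $E_{rad}^{\alpha,p}$ by the constraint $I(u_n)=1$ together with uniform convexity; passing to a weakly convergent subsequence $u_n \wto u$, the compact embedding gives $J(u_n) \to J(u) = 1/c > 0$, so $u \ne 0$, and a standard argument using the $(S_+)$-type property of $I'$ on the Coulomb--Sobolev space (which follows from uniform convexity, cf.\ \cite{MR3568051}) upgrades weak to strong convergence. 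Once this is in place, I would invoke Theorem~\ref{Theorem 7} directly: the min-max values $\lambda_k$ defined in \eqref{5} via the $\Z_2$-cohomological index are precisely the abstract scaled eigenvalues, they form a nondecreasing sequence, and $\lambda_k \nearrow \infty$ because $i(\M) = \infty$ (the unit sphere of the infinite-dimensional space $E_{rad}^{\alpha,p}$ deformation-retracts onto $\M$ up to the scaling, so $\F_k \ne \emptyset$ for all $k$) while the compactness forces the values to diverge. Statements $(i)$, $(ii)$, $(iii)$ are then the specializations of the corresponding abstract conclusions of Theorem~\ref{Theorem 7}: $(i)$ is attainment of $\lambda_1$ by a minimizer on $\M_s$ with $\lambda_1 > 0$ since $J$ is bounded on $\M_s$; $(ii)$ is the index lower bound for the eigenset of a multiple eigenvalue; and $(iii)$ is the index computation of the sublevel/superlevel sets between consecutive distinct eigenvalues.

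The main obstacle I expect is not the abstract deduction but the verification of the two ``concrete'' analytic facts on which everything rests: first, the \emph{compactness} of the radial embedding $E_{rad}^{\alpha,p}(\R^N)\hookrightarrow L^q(\R^N)$ (one needs that $q$, being strictly above the threshold $\tilde q$ for radial functions, lands in the compact range — this is the role of the gap $\tilde q < q$ and must be extracted carefully from \cite[Theorem~4]{MR3568051} or a Strauss-type radial lemma), and second, the $(S_+)$ property of $I'$ on $\M_s$, i.e.\ that $u_n \wto u$ together with $\limsup \langle I'(u_n), u_n - u\rangle \le 0$ implies $u_n \to u$ strongly; this uses the uniform convexity of the norm \eqref{normsps} and the structure of $I$ as a sum of the Dirichlet energy and the Coulomb term, and is where the uniform-convexity statement cited from \cite[Propositions~2.2 and~2.9]{MR3568051} does the heavy lifting. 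A secondary technical point is checking that $\M$ (hence $\M_s$) is a genuine $C^1$ submanifold — i.e.\ that $I'(u)\ne 0$ on $\M$ — which follows from $\langle I'(u),u\rangle = s\,I(u) = s \ne 0$ by homogeneity, so this is routine. Once the compactness and $(S_+)$ facts are granted, Theorem~\ref{thmeigen} follows from Theorem~\ref{Theorem 7} with no further work.
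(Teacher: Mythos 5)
Your overall strategy---verify the structural hypotheses of the abstract result (Theorem \ref{Theorem 7}) for $I$, $J$ on $E_{rad}^{\alpha,p}(\R^N)$ and then specialize its conclusions---is exactly the route the paper takes (its verification is carried out inside the proof of Theorem \ref{thmspl}). But you have omitted the one hypothesis that is genuinely nontrivial and specific to this problem, namely $(H_{11})$: every solution of $I'(u)=\lambda J'(u)$ must satisfy $I(u)=\lambda J(u)$. This is not automatic here, because $I$ is \emph{not} homogeneous under scalar multiplication: it is the sum of a $2$-homogeneous Dirichlet term and a $2p$-homogeneous Coulomb term, so testing the eigenvalue equation with $u$ gives $\int|\nabla u|^2+\int|I_{\alpha/2}\star|u|^p|^2=\lambda\int|u|^q$, which does not reduce to $I(u)=\lambda J(u)$ (the coefficients $\tfrac12$, $\tfrac1{2p}$, $\tfrac1q$ differ). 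Your parenthetical justification ``$\langle I'(u),u\rangle=s\,I(u)$ by homogeneity'' is false for precisely this reason: $I$ is $s$-homogeneous only under the dilation-type scaling $u_t(x)=t^\sigma u(tx)$, not under $u\mapsto tu$ (the regularity of $\M_s$ is still fine, since $\langle I'(u),u\rangle>0$ for $u\neq0$, but the identity you invoke is not available). Without $(H_{11})$ the argument breaks at the very statement you are trying to prove: a constrained critical point of $\widetilde{\Psi}=1/J$ on $\M_s$ satisfies $I'(u)=\lambda J'(u)$ for \emph{some} Lagrange-multiplier eigenvalue $\lambda$, but nothing identifies that $\lambda$ with the critical value $\widetilde{\Psi}(u)$, so the min-max values $\lambda_k$ in \eqref{5} need not be eigenvalues at all.

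The paper closes this gap with the Pohozaev identity for the scaled problem: for any solution of $I'(u)=\lambda J'(u)$ one has
\begin{equation*}
\frac{N-2}{2}\int_{\R^N}|\nabla u|^2\,dx+\frac{N+\alpha}{2p}\int_{\R^N}|I_{\alpha/2}\star|u|^p|^2\,dx-\frac{N\lambda}{q}\int_{\R^N}|u|^q\,dx=0,
\end{equation*}
and combining $\sigma\bigl(I'(u)-\lambda J'(u)\bigr)u-P(u)=0$ yields $s\,\bigl(I(u)-\lambda J(u)\bigr)=0$, which is exactly $(H_{11})$ (the identity is proved along the lines of Ianni--Ruiz and Mercuri--Moroz--Van Schaftingen). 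Your remaining verifications are in the right spirit and essentially match the paper's: the $(S_+)$/weak-to-strong property of $I'$ is hypothesis $(H_7)$ (the paper cites \cite[Lemma 2.10]{PeSi0}), and the compact radial embedding into $L^q$ (legitimate since $\tilde q<q<2^*$) is what gives the weak continuity of $J$ and $(H_9)$. One further small correction: you do not need to re-prove a Palais--Smale condition for $\widetilde{\Psi}$ to invoke the abstract theorem---its compactness is already encoded in $(H_7)$ and $(H_9)$---but you do need $(H_{10})$--$(H_{11})$, and it is the Pohozaev step that your proposal is missing.
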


Theorem \ref{thmeigen} implies that when $\lambda\in (0,\lambda_1)$, the functional $I(u)-\lambda J(u)$ is coercive, and hence, if $\mu>0$ is small, then standard arguments can be used to show the existence of a local minimizer and a mountain pass type critical point. When $\lambda\ge \lambda_1$ the geometry of the functional has a drastic change and the same argument cannot be applied anymore. Our main result is the following 

\begin{theorem}\label{thmspl} Assume \eqref{f1}, \eqref{f2} and $\lambda>0$. Then, there exists $\mu^*>0$ such that, if $\mu\in(0,\mu^*)$, equation \eqref{103} has two solutions $u_1,u_2$ such that $\Phi(u_1)<0<\Phi(u_2)$.
\end{theorem}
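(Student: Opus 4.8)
The plan is to obtain the two solutions by a combination of a local minimization (producing $u_1$ with $\Phi(u_1)<0$) and a scaled linking argument (producing $u_2$ with $\Phi(u_2)>0$). First I would analyze the behavior of $\Phi$ near the origin. Because of the scaling $u_t(x)=t^\sigma u(tx)$ one has $I(u_t)-\lambda J(u_t)=t^s\,(I(u)-\lambda J(u))$ with $s=\sigma q-N$, so the sign structure of $I-\lambda J$ on the sphere $\M$ is governed by the scaled eigenvalues $\lambda_k$ of Theorem~\ref{thmeigen}. Fix $k$ with $\lambda_k\le\lambda<\lambda_{k+1}$ (if $\lambda<\lambda_1$ the argument is the standard one already indicated after Theorem~\ref{thmeigen}, so assume $\lambda\ge\lambda_1$). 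Using assumption \eqref{f1}, the term $\mu\int F(x,u)\,dx$ behaves like $\mu\|u\|_\ell^\ell$ with $\ell<q$, and the term $\frac1r\int|u|^r$ is of higher order since $r\in(q,2^*)$; thus for $u$ small both the Slater term and the critical-type term are lower order than $I-\lambda J$ along the scaling, except that the $\mu$-term has the \emph{opposite} sign and a \emph{lower} scaling power. This is the mechanism that creates a negative region near $0$ and hence a local minimizer.

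For $u_1$: I would show that $\Phi$ is bounded below on a small ball $B_\rho=\{\|u\|\le\rho\}$ and attains a negative infimum there. Boundedness below on $B_\rho$ follows from the embeddings $E_{rad}^{\alpha,p}\hookrightarrow L^\ell$ for $\ell\in(\tilde q,2^*]$ (so in particular for $\ell,q,r$), controlling $\lambda J$, $\mu\int F$ and $\frac1r\int|u|^r$ by $\|u\|^\ell,\|u\|^q,\|u\|^r$ respectively while $I(u)\sim\|u\|^2$-type control is available from \eqref{normsps}; for $\rho$ small the quadratic/$I$ part dominates the $r$ and (after choosing $\mu$ small) the $q$ terms, giving $\inf_{B_\rho}\Phi>-\infty$. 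Negativity of the infimum: pick any $v\in\M$ with $\widetilde\Psi(v)>\lambda$ (possible since $\lambda<\lambda_{k+1}=\sup$ over $\F_{k+1}$, or more simply since $\lambda_k$-level sets are nonempty), equivalently $I(v)-\lambda J(v)<0$ when $I(v)=1$; then scaling down, $\Phi(v_t)=t^s(I(v)-\lambda J(v))-\mu\int F(x,v_t)-\frac1r t^{\sigma r-N}\int|v|^r$, and since $s<\sigma\ell-N$ is the relevant comparison (because $\ell<q$ forces $\sigma\ell-N<s$), wait—here the Slater term with power $\sigma\ell-N<s$ would dominate; one must instead use that $F\ge0$ by \eqref{f2} works \emph{against} negativity, so the correct route is: the $\mu$-term is nonnegative but small, the $r$-term has power $\sigma r-N>s$, hence for $t$ small $\Phi(v_t)\le \tfrac12 t^s(I(v)-\lambda J(v))<0$ provided $\mu$ is small enough that $\mu\int F(x,v_t)\le\tfrac14 t^s|I(v)-\lambda J(v)|$ for the chosen $v$ and small $t$; this is where the threshold $\mu^*$ (depending on a fixed test function) enters. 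Then Ekeland's variational principle on $\overline{B_\rho}$, together with a (PS) analysis, yields a critical point $u_1$ with $\Phi(u_1)=\inf_{B_\rho}\Phi<0$.

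For $u_2$: I would apply the scaled abstract linking theorem (the ``Theorem 7'' referenced in the paper) to the functional $\Phi$. The linking sets are built from the sublevel and superlevel sets of $\widetilde\Psi$ on $\M_s$ given by Theorem~\ref{thmeigen}(iii): with $\lambda_k\le\lambda<\lambda_{k+1}$, the cone over $\widetilde\Psi^{\lambda}$ (scaled copies $\{u_t:u\in\widetilde\Psi^\lambda,\ 0<t\le t_0\}\cup\{0\}$) links the set $\{u_t:u\in\M_s\setminus\widetilde\Psi_{\lambda_{k+1}},\ t\ge t_0\}$, because the index computations $i(\widetilde\Psi^{\lambda})=i(\M_s\setminus\widetilde\Psi_{\lambda_{k+1}})=k$ guarantee the cohomological-index linking hypothesis. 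On the small sphere (scaling parameter $\to0$) the linking set sits at a strictly positive level: there $I-\lambda J\ge c\,t^s>0$ on $\widetilde\Psi^\lambda$-type sets after restricting appropriately—more precisely one uses that on $\M_s\setminus\widetilde\Psi_\lambda$ one has $\widetilde\Psi<\lambda$ hence $I-\lambda J>0$—and the lower-order $\mu$ and $r$ terms do not destroy this for small scaling, yielding $\inf\Phi|_{\text{link}}\ge\beta>0$. On the large piece and on the boundary annulus one shows $\Phi$ stays bounded above (the $-\frac1r\int|u|^r$ term with its higher scaling power $\sigma r-N>s$ eventually dominates and drives $\Phi\to-\infty$ along large scalings, capping the linking value). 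This produces a critical value $c\ge\beta>0$, hence $u_2$ with $\Phi(u_2)\ge\beta>0$.

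The main obstacle is the \textbf{compactness (Palais--Smale) analysis}: $\R^N$ is unbounded and the exponent $r$ sits below $2^*$ but the Coulomb--Sobolev embedding $E_{rad}^{\alpha,p}\hookrightarrow L^r$ is continuous but not compact up to $2^*$, and the nonlocal term $(I_\alpha\star|u|^p)|u|^{p-2}u$ must be shown to behave well under weak convergence. One must verify that $\Phi$ satisfies (PS) (or at least (PS)$_c$ below the relevant threshold, or a Cerami condition) at the minimax level $c$ and at the local-minimum level; this requires: (a) boundedness of (PS) sequences, obtained by combining $\Phi(u_n)=O(1)$ with $\langle\Phi'(u_n),u_n\rangle=o(\|u_n\|)$ and exploiting that $q,\ell<r$ so a suitable linear combination controls $\|u_n\|$; and (b) the passage to the limit, where radial symmetry restores compactness of the embeddings into $L^\ell$ for $\ell\in(\tilde q,2^*)$ (the key point of working in $E_{rad}^{\alpha,p}$, cf.\ \cite[Theorem 4]{MR3568051}), so that the subcritical terms converge, while the $r$-term—being strictly subcritical—also converges by the radial compactness. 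The nonlocal term is handled by a Brezis--Lieb type splitting together with the weak continuity of $u\mapsto I_{\alpha/2}\star|u|^p$ on bounded sets. Once (PS) is in place, the abstract linking theorem and Ekeland's principle deliver $u_1$ and $u_2$ with the asserted signs, and $\mu^*$ is the smallest of the finitely many smallness thresholds used above.
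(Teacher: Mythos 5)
Your proposal has genuine gaps, and the most serious one concerns the solution at negative level. You propose to obtain $u_1$ by minimizing $\Phi$ on a small ball via Ekeland's principle, but this is exactly the ``standard argument'' that the paper points out fails once $\lambda\ge\lambda_1$: there is no boundary barrier. On every direction $u\in\M_s$ with $\widetilde{\Psi}(u)<\lambda$ one has $I(u)-\lambda J(u)<0$ (note your sign is reversed: $\widetilde{\Psi}<\lambda$ gives $I-\lambda J<0$, not $>0$), and, more importantly, since $\ell<q$ the term $\mu\int F(u_t)\,dx\sim c\mu t^{\sigma\ell-N}\int|u|^\ell dx$ dominates $t^{s}=t^{\sigma q-N}$ as $t\to0^+$, so $\Phi(u_t)<0$ for small $t$ in \emph{every} direction, for every fixed $\mu>0$. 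Hence $\inf_{\partial B_\rho}\Phi$ cannot be separated from $\inf_{B_\rho}\Phi$, Ekeland gives no interior critical point, and your choice of ``$\mu$ small to protect negativity'' is both unnecessary (the $-\mu\int F$ term has the favorable sign) and misplaced: in the paper $\mu^*$ is chosen small to keep the \emph{positive} level alive, namely $\inf_B\Phi_\lambda>0$ on $B=\{w_\rho:w\in B_0\}$ with $B_0=\widetilde{\Psi}_{\lambda_{k+1}}$ at a \emph{fixed} radius $\rho$. For the same reason your claim that the linking set ``sits at a strictly positive level'' as the scaling parameter tends to $0$ is false, and your linking partner $\{u_t:u\in\M_s\setminus\widetilde{\Psi}_{\lambda_{k+1}}\}$ is the wrong set (on it $\widetilde{\Psi}<\lambda_{k+1}$, which includes points where $\Phi<0$ at small scale); the correct configuration is $A_0=\widetilde{\Psi}^{\lambda_k}$, $B_0=\widetilde{\Psi}_{\lambda_{k+1}}$ with $i(A_0)=i(\M_s\setminus B_0)=k$. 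The paper's route is different in structure: both solutions come from the single two-critical-point linking theorem (Theorem \ref{thmlinking}), applied through the abstract Theorem \ref{thmabstract} after verifying $(H_1)$--$(H_{13})$, $(F_1)$--$(F_3)$, $(G_1)$--$(G_4)$; the negative-level solution $u_1$ is a minimax point squeezed between $\inf_{B^*}\Phi$ and $\sup_A\Phi<0$, not a local minimizer.

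The second gap is compactness. You assert that boundedness of \PS{} sequences follows from ``a suitable linear combination'' of $\Phi(u_n)$ and $\Phi'(u_n)u_n$ using $q,\ell<r$. That combination does not close here: in $\Phi(u_n)-\tfrac1r\Phi'(u_n)u_n$ the $q$-term appears with the negative coefficient $-\lambda(\tfrac1q-\tfrac1r)$ and cannot be absorbed (there is no Ambrosetti--Rabinowitz structure for it), and the Coulomb term's coefficient $\tfrac1{2p}-\tfrac1r$ may also be negative since $q<2p$ allows $r<2p$. The paper's Lemma \ref{PS} instead normalizes by the scaling, setting $v_n=(u_n)_{I(u_n)^{-1/s}}$ with $I(v_n)=1$, shows the $f$-terms vanish because $\ell<q$, and derives the contradiction $\int_{\R^N}|\nabla v_n|^2dx\to r$ and $\to 2$ simultaneously with $r>2$; strong convergence of the bounded sequence is then taken from \cite[Lemma 2.10]{PeSi0}, not from a Brezis--Lieb splitting alone. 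Finally, note that the positivity of the linking level also requires verifying $(F_1)$, $(F_3)$, $(G_1)$--$(G_4)$ via the Coulomb--Sobolev embeddings of \cite{MR3568051} (in particular that $\int|u|^\ell dx$ is bounded away from zero on weakly compact subsets, used in Lemma \ref{wcs}); your sketch does not supply these verifications.
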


In order to prove Theorem \ref{thmspl} we will develop a linking theorem, based on a cohomological index theory (see Theorem \ref{thmabstract}). See also Remark \ref{rmkgene} for more applications.

\subsection{Abstract Theory}

Recently in Mercuri and Perera \cite{MePe2} the following notion of scaling was introduced: let $W$ be a reflexive Banach space, then

\begin{definition}[{\cite[Definition 1.1]{MePe2}}]\label{dscaling}
	A scaling on $W$ is a continuous mapping
	\[
	W \times [0,\infty) \to W, \quad (u,t) \mapsto u_t
	\]
	satisfying
	\begin{enumerate}
		\item[$(H_1)$] $(u_{t_1})_{t_2} = u_{t_1 t_2}$ for all $u \in W$ and $t_1, t_2 \ge 0$,
		\item[$(H_2)$] $(\tau u)_t = \tau u_t$ for all $u \in W$, $\tau \in \R$, and $t \ge 0$,
		\item[$(H_3)$] $u_0 = 0$ and $u_1 = u$ for all $u \in W$,
		\item[$(H_4)$] $u_t$ is bounded on bounded sets in $W \times [0,\infty)$,
		\item[$(H_5)$] $\exists s > 0$ such that $\norm{u_t} = \O(t^s)$ as $t \to \infty$, uniformly in $u$ on bounded sets.
	\end{enumerate}
\end{definition}
As we will see this type of scaling appears naturally in many situations involving PDEs and it can be used to describe the geometry of the energy functional associated with the equation. This in turn is very helpful when trying to apply a specific critical point theorem such as, for example, the Mountain Pass Theorem or the Linking Theorem. 

Denote by $W^\ast$ the dual of $W$. Recall that $q \in C(W,W^\ast)$ is a potential operator if there is a functional $Q \in C^1(W,\R)$, called a potential for $q$, with Fr\'{e}chet derivative $Q' = q$. By replacing $Q$ with $Q - Q(0)$ if necessary, we may assume that $Q(0) = 0$.

\begin{definition}[{\cite[Definition 2.1]{MePe2}}]
	A scaled operator is an odd potential operator $\As \in C(W,W^\ast)$ that maps bounded sets into bounded sets and satisfies
	\[
	\As(u_t) v_t = t^s \As(u) v \quad \forall u, v \in W,\, t \ge 0.
	\]
\end{definition}

Let $\As$ and $\Bs$ be scaled operators satisfying
\begin{enumerate}
	\item[$(H_6)$] $\As(u) u > 0$ for all $u \in W \setminus \set{0}$,
	\item[$(H_7)$] every sequence $\seq{u_j}$ in $W$ such that $u_j \wto u$ and $\As(u_j)(u_j - u) \to 0$ has a subsequence that converges strongly to $u$,
	\item[$(H_8)$] $\Bs(u) u > 0$ for all $u \in W \setminus \set{0}$,
	\item[$(H_9)$] if $u_j \wto u$ in $W$, then $\Bs(u_j) \to \Bs(u)$ in $W^\ast$.
\end{enumerate}
The scaled eigenvalue problem
\begin{equation} \label{1}
	\As(u) = \lambda \Bs(u) \quad \text{in } W^\ast
\end{equation}
was studied in \cite{MePe2}. We say that $\lambda \in \R$ is an eigenvalue of this problem if there is a $u \in W \setminus \set{0}$, called an eigenfunction associated with $\lambda$, satisfying \eqref{1}. Then $u_t$ is also an eigenfunction associated with $\lambda$ for any $t > 0$ since
\[
\As(u_t) v = \As(u_t) (v_{t^{-1}})_t = t^s \As(u) v_{t^{-1}} = t^s \lambda \Bs(u) v_{t^{-1}} = \lambda \Bs(u_t) (v_{t^{-1}})_t = \lambda \Bs(u_t) v
\]
for all $v \in W$.

The potentials
\begin{equation} \label{2}
	I_s(u) = \int_0^1 \As(\tau u) u\, d\tau, \quad J_s(u) = \int_0^1 \Bs(\tau u) u\, d\tau, \quad u \in W
\end{equation}
of $\As$ and $\Bs$, respectively, are even, \hspace{-5pt} bounded \hspace{-5pt} on \hspace{-5pt} bounded sets, and have the scaling property
\begin{equation} \label{3}
	I_s(u_t) = t^s I_s(u), \quad J_s(u_t) = t^s J_s(u) \quad \forall u \in W,\, t \ge 0
\end{equation}
(see \cite[Proposition 2.2]{MePe2}). By \eqref{2}, $(H_6)$, and $(H_8)$,
\begin{equation} \label{4}
	I_s(u) > 0, \quad J_s(u) > 0 \quad \forall u \in W \setminus \set{0}.
\end{equation}
It follows from $(H_9)$ that if $u_j \wto u$ in $W$, then $J_s(u_j) \to J_s(u)$ (see {\cite[Proposition 2.4]{MePe2}}). We assume that $I_s$ and $J_s$ satisfy
\begin{enumerate}
	\item[$(H_{10})$] $I_s$ is coercive, i.e., $I_s(u) \to \infty$ as $\norm{u} \to \infty$,
	\item[$(H_{11})$] every solution of problem \eqref{1} satisfies $I_s(u) = \lambda\, J_s(u)$.
\end{enumerate}

We have $I_s'(0) = \As(0) = 0$ since $\As$ is odd, so the origin is a critical point of $I_s$. It is the only critical point of $I_s$ since
\[
I_s'(u) u = \As(u) u > 0 \quad \forall u \in W \setminus \set{0}
\]
by $(H_6)$. So $I_s(0) = 0$ is the only critical value of $I_s$, and hence it follows from the implicit function theorem that
\[
\M_s = \bgset{u \in W : I_s(u) = 1}
\]
is a $C^1$-Finsler manifold. Since $I_s$ is continuous, even, and coercive, $\M_s$ is complete, symmetric, and bounded. 

Let
\[
\Psi(u) = \frac{I_s(u)}{J_s(u)}, \quad u \in W \setminus \set{0}
\]
and let $\widetilde{\Psi} = \restr{\Psi}{\M_s}$. Then eigenvalues of problem \eqref{1} coincide with critical values of $\widetilde{\Psi}$ (see \cite[Proposition 2.5]{MePe2}).

Let $\F$ denote the class of symmetric subsets of $\M_s$ and let $i(M)$ denote the $\Z_2$-cohomological index of $M \in \F$ (see Fadell and Rabinowitz \cite{MR0478189}). For $k \ge 1$, let $\F_k = \bgset{M \in \F : i(M) \ge k}$ and set
\begin{equation} \label{5}
	\lambda_k := \inf_{M \in \F_k}\, \sup_{u \in M}\, \widetilde{\Psi}(u).
\end{equation}
The following theorem was proved in \cite{MePe2}.

\begin{theorem}[{\cite[Theorem 2.10]{MePe2}}] \label{Theorem 7}
	Assume $(H_1)$--$(H_{11})$. Then $\lambda_k \nearrow \infty$ is a sequence of eigenvalues of problem \eqref{1}.
	\begin{enumroman}
		\item The first eigenvalue is given by
		\[
		\lambda_1 = \min_{u \in \M_s}\, \widetilde{\Psi}(u) > 0.
		\]
		\item If $\lambda_k = \dotsb = \lambda_{k+m-1} = \lambda$ and $E_\lambda$ is the set of eigenfunctions associated with $\lambda$ that lie on $\M_s$, then $i(E_\lambda) \ge m$.
		\item If $\lambda_k < \lambda < \lambda_{k+1}$, then
		\[
		i(\widetilde{\Psi}^{\lambda_k}) = i(\M_s \setminus \widetilde{\Psi}_\lambda) = i(\widetilde{\Psi}^\lambda) = i(\M_s \setminus \widetilde{\Psi}_{\lambda_{k+1}}) = k,
		\]
		where $\widetilde{\Psi}^a = \bgset{u \in \M_s : \widetilde{\Psi}(u) \le a}$ and $\widetilde{\Psi}_a = \bgset{u \in \M_s : \widetilde{\Psi}(u) \ge a}$ for $a \in \R$.
	\end{enumroman}
\end{theorem}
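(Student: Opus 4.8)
The plan is to realize the eigenvalues of \eqref{1} as critical values of the even functional $\widetilde{\Psi}$ on the symmetric $C^1$-Finsler manifold $\M_s$ and to carry out the Ljusternik--Schnirelmann minimax scheme built on the $\Z_2$-cohomological index. First I would record the geometry of $\M_s$. By $(H_6)$, $(H_{10})$ and the scaling identity \eqref{3}, for each $u\in W\setminus\set{0}$ the map $t\mapsto I_s(u_t)=t^sI_s(u)$ increases continuously from $0$ to $\infty$, so there is a unique $t(u)=I_s(u)^{-1/s}$ with $\pi(u):=u_{t(u)}\in\M_s$; by $(H_2)$, $\pi\colon W\setminus\set{0}\to\M_s$ is an odd continuous retraction that restricts to an odd homeomorphism of the unit sphere of $W$ onto $\M_s$, whence $i(\M_s)=\infty$ when $\dim W=\infty$ and $\F_k\neq\emptyset$ for every $k$. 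Since $\Psi$ is invariant under the scaling, $\widetilde{\Psi}(\pi(u))=\Psi(u)$, so $\widetilde{\Psi}$ is simply $\Psi$ transported to $\M_s$, and on $\M_s$ one has $\widetilde{\Psi}=1/J_s$; from \eqref{4}, the boundedness of $J_s$ on the bounded set $\M_s$, and the weak continuity of $J_s$ (a consequence of $(H_9)$, see \cite[Proposition 2.4]{MePe2}), $\widetilde{\Psi}$ is bounded away from $0$ on $\M_s$, its infimum is attained (project a minimizing sequence back onto $\M_s$ by $\pi$), and $\lambda_1=\min_{\M_s}\widetilde{\Psi}>0$ by inserting $M=\set{u,-u}$ into \eqref{5}. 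Finally, by \cite[Proposition 2.5]{MePe2} the critical values of $\widetilde{\Psi}$ are exactly the eigenvalues of \eqref{1}, while $(H_{11})$ matches the eigenfunctions lying on $\M_s$ with the critical set of $\widetilde{\Psi}$, which is the object of $(ii)$.

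The technical core, and the step I expect to be the main obstacle, is the Palais--Smale condition for $\widetilde{\Psi}$ on $\M_s$. Let $\seq{u_j}\subset\M_s$ with $\widetilde{\Psi}(u_j)\to c$ and $\norm{\widetilde{\Psi}'(u_j)}\to0$. As $\M_s$ is bounded, $\seq{u_j}$ is bounded, hence $u_j\wto u$ along a subsequence; then $J_s(u_j)\to J_s(u)$ by $(H_9)$, and since $\widetilde{\Psi}(u_j)=1/J_s(u_j)\to c$ with $\widetilde{\Psi}$ bounded away from $0$ on $\M_s$ we get $J_s(u)=1/c>0$, so $u\neq0$, and also $\Bs(u_j)\to\Bs(u)$ in $W^\ast$. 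The tangent space $T_{u_j}\M_s$ is $\ker\As(u_j)$ with $\As(u_j)\neq0$ by $(H_6)$, so vanishing of the constrained derivative yields Lagrange multipliers $\mu_j$ with $\Psi'(u_j)-\mu_j\As(u_j)\to0$ in $W^\ast$, where $\Psi'(u_j)=\As(u_j)/J_s(u_j)-\Bs(u_j)/J_s(u_j)^2$ on $\M_s$; setting $\nu_j=1/J_s(u_j)-\mu_j$ this reads $\nu_j\As(u_j)-\Bs(u_j)/J_s(u_j)^2\to0$. Evaluating on $u_j$ and using $\Bs(u_j)u_j\to\Bs(u)u>0$ (by $(H_8)$) together with the boundedness of $\As(u_j)u_j$ shows $\nu_j$ is bounded away from $0$; evaluating on $u_j-u$ and using $\Bs(u_j)(u_j-u)\to0$ (again from $(H_9)$ and $u_j-u\wto0$) then gives $\As(u_j)(u_j-u)\to0$. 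Now $(H_7)$ provides a subsequence with $u_j\to u$ in $W$, and $u\in\M_s$ by continuity of $I_s$; thus $\widetilde{\Psi}$ satisfies Palais--Smale at every level.

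With Palais--Smale in hand and $\widetilde{\Psi}$ even, continuous and bounded below on the complete symmetric $C^1$-Finsler manifold $\M_s$, the equivariant first deformation lemma holds, and the remaining assertions follow from the standard Ljusternik--Schnirelmann theory using the properties of the cohomological index --- monotonicity, subadditivity, continuity, invariance under odd continuous maps, and its value on spheres (see Fadell and Rabinowitz \cite{MR0478189}). Each $\lambda_k$ is then a critical value, i.e.\ an eigenvalue of \eqref{1}, and $\F_{k+1}\subset\F_k$ gives $\lambda_k\le\lambda_{k+1}$; if $\lambda_k=\dotsb=\lambda_{k+m-1}=\lambda$, the usual intersection argument --- a near-optimal $M\in\F_{k+m-1}$, a symmetric neighbourhood $N$ of $E_\lambda$ with $i(N)=i(E_\lambda)$, and the deformation of $M\setminus N$ below $\lambda$ --- forces $i(E_\lambda)\ge m$, which is $(ii)$. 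For the divergence $\lambda_k\nearrow\infty$ suppose $\lambda_k\le c<\infty$ for all $k$ and pick $M_k\in\F_k$ with $\sup_{M_k}\widetilde{\Psi}\le c+1$; then $X:=\overline{\bigcup_k M_k}$ is a bounded, closed, symmetric subset of $\bgset{u\in\M_s:J_s(u)\ge1/(c+1)}$ with $i(X)=\infty$. But a bounded symmetric set cannot at once have infinite index and be weakly separated from the origin: if $0\notin\overline{X}^{\,w}$ one covers the weakly compact set $\overline{X}^{\,w}$ by finitely many $\set{\phi_i>c_i}$ with $c_i>0$, so $X\subset\bigcup_{i=1}^m\set{|\phi_i|\ge c_i}$; since $u\mapsto\mathrm{sign}\,\phi_i(u)$ is an odd continuous map of $\set{|\phi_i|\ge c_i}$ into $S^0$, each such set has index $\le1$, and subadditivity gives $i(X)\le m$, a contradiction. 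Hence (by Eberlein--\v{S}mulian) some sequence in $X$ tends weakly to $0$, contradicting $J_s\ge1/(c+1)>0$ on $X$ and the weak continuity of $J_s$; therefore $\lambda_k\nearrow\infty$.

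Finally, for $(iii)$, fix $\lambda$ with $\lambda_k<\lambda<\lambda_{k+1}$. A near-optimal $M\in\F_k$ satisfies $\sup_M\widetilde{\Psi}<\lambda$, so $M\subset\bgset{\widetilde{\Psi}<\lambda}\subset\widetilde{\Psi}^\lambda$, giving $i(\widetilde{\Psi}^\lambda)\ge k$ and $i(\M_s\setminus\widetilde{\Psi}_\lambda)=i(\bgset{\widetilde{\Psi}<\lambda})\ge k$; conversely $i(\widetilde{\Psi}^\lambda)\le k$, since otherwise $\widetilde{\Psi}^\lambda\in\F_{k+1}$ would force $\lambda_{k+1}\le\sup_{\widetilde{\Psi}^\lambda}\widetilde{\Psi}\le\lambda$. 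Together with monotonicity this pins $\widetilde{\Psi}^\lambda$ and $\M_s\setminus\widetilde{\Psi}_\lambda$ at index exactly $k$, and the identities $i(\widetilde{\Psi}^{\lambda_k})=k$ and $i(\M_s\setminus\widetilde{\Psi}_{\lambda_{k+1}})=i(\bgset{\widetilde{\Psi}<\lambda_{k+1}})=k$ then follow by deforming $\bgset{\widetilde{\Psi}<\lambda_{k+1}}$ onto $\widetilde{\Psi}^\lambda$ and $\widetilde{\Psi}^\lambda$ onto $\widetilde{\Psi}^{\lambda_k}$ with the deformation lemma and the continuity of the cohomological index; this last bookkeeping is routine once the Palais--Smale condition and the index estimates above are in place, so the genuine work of the proof is concentrated in the second step.
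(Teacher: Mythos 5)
The paper does not actually prove this statement: Theorem \ref{Theorem 7} is quoted verbatim from \cite[Theorem 2.10]{MePe2}, so there is no internal proof to compare against. Your reconstruction follows what is essentially the expected (and, in the cited reference, standard) route: transport the eigenvalue problem to $\widetilde{\Psi}=1/J_s$ on the bounded, complete, symmetric $C^1$-Finsler manifold $\M_s$, verify the Palais--Smale condition there via approximate Lagrange multipliers, $(H_7)$--$(H_9)$, and then run the cohomological-index minimax machinery; your constrained PS argument and your proof that $\lambda_k\nearrow\infty$ (finite subadditive covering by half-spaces of index $\le 1$ if $0$ is not in the weak closure, Eberlein--\v{S}mulian plus weak continuity of $J_s$ otherwise) are correct. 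Two small imprecisions are worth fixing, though neither is fatal. First, $\pi$ restricted to the unit sphere need not be a homeomorphism onto $\M_s$ (injectivity would require that a scaling orbit meet the sphere only once, which is not among $(H_1)$--$(H_{11})$); but all you need is that $\pi$ is an odd continuous map of the sphere into $\M_s$, and monotonicity of the index already gives $i(\M_s)=\infty$ and $\F_k\neq\emptyset$, together with compact sets of arbitrary index showing $\lambda_k<\infty$. Second, your parenthetical justification that the infimum defining $\lambda_1$ is attained (``project a minimizing sequence back onto $\M_s$ by $\pi$'') implicitly uses weak sequential lower semicontinuity of $I_s$, i.e.\ $(H_{13})$, which is not assumed in this theorem; the attainment should instead be deduced from the Palais--Smale condition you have already established (Ekeland's principle or the deformation lemma on $\M_s$), after which the rest of your argument goes through as written.
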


We consider the question of existence and multiplicity of solutions to the nonlinear operator equation
\begin{equation} \label{18}
	\As(u) = \lambda \Bs(u) + \mu f(u) + g(u) \quad \text{in } W^\ast,
\end{equation}
where $\mu>0$, $f, g \in C(W,W^\ast)$ are potential operators satisfying
\begin{enumerate}
	\item[$(F_1)$] The potential $F$ of $f$ with $F(0)=0$ satisfies $\lim_{t\to 0^+}\frac{F(u_t)}{t^s}=\infty$ uniformly on weakly compact subsets of $W\setminus\{0\}$.
	\item[$(F_2)$] $F(u)>0$ for all $u\in W\setminus\{0\}$.
	\item[$(F_3)$] if $u_j \wto u$ in $W$, then $f(u_j) \to f(u)$ in $W^\ast$, in particular, $f$ map bounded sets into bounded sets.
	\item[$(G_1)$] The potential $G$ of $g$ with $G(0)=0$ satisfies $G(u_t)=o(t^\eta)$ as $t\to 0^+$, $\eta>s$, uniformly in $u$ on bounded sets.
	\item[$(G_2)$] $G(u)>0$ for all $u\in W\setminus\{0\}$.
	\item[$(G_3)$] $\lim_{t\to \infty}\frac{G(u_t)}{t^s}=\infty$ uniformly on weakly compact subsets of $W\setminus\{0\}$.
		\item[$(G_4)$] $G$ is bounded on bounded subsets of $W$.
\end{enumerate}

Solutions of equation \eqref{18} coincide with critical points of the $C^1$-functional
\[
\Phi_\lambda(u) = I_s(u) - \lambda\, J_s(u) - \mu F(u) - G(u), \quad u \in W,
\]
where
\[
F(u) = \int_0^1 f(\tau u) u\, d\tau, \quad G(u) = \int_0^1 g(\tau u) u\, d\tau.
\]
It follows from $(F_3)$ that if $u_j \wto u$ in $W$, then $F(u_j) \to F(u)$ (see {\cite[Proposition 2.4]{MePe2}}). As a consequence of Theorem \ref{Theorem 7}, the geometry of the functional $\Phi_\lambda$ will change according to the value of $\lambda>0$. In particular, when $\lambda\ge \lambda_1$ the functional $\Phi_\lambda$ does not possess the mountain pass geometry. For this reason we will develop a linking theorem, based on a cohomological index theory, by using the scaling $u_t$. To this end consider also 

\begin{enumerate}
	\item[$(H_{12})$] for all $u\in W\setminus\{0\}$, the functions $(0,\infty)\to \mathbb{R}$, $t\mapsto I_s(tu), J_s(tu)$ are strictly increasing.
	\item[$(H_{13})$] the function $I_s$ is weakly sequentially  lower semi continuous.
\end{enumerate}

Condition $(H_{12})$ combined with $(H_{10})$, $(H_{6})$ and $I_s(0)=0$ implies that, for each $u \in W \setminus \set{0}$, there exists a unique $t(u)>0$ such that $I_s(t(u)u)=1$. We write $\pi_{rad}:W \setminus \set{0} \to \mathcal{M}_s$ to denote the radial projection $\pi_{rad}(u)=t(u)u$ (see Proposition \ref{radialprojection}).

\begin{theorem}\label{thmlinking} Assume $(H_1),(H_3)$, $(H_6)$, $(H_{10})$ and $(H_{12})$. Let $E\in C^1(W)$, $A_0,B_0\subset \mathcal{M}_s$ such that $A_0\cap B_0=\emptyset$ and
	\begin{equation*}
		i(A_0)=i(\mathcal{M}_s\setminus B_0)=k<\infty.
	\end{equation*}
	Assume that there exist $\tilde{w}\in \mathcal{M}_s\setminus A_0$, $0\le r<\rho<R$ and $a<b$ such that, setting
	\begin{equation*}
		A_1=\{\pi_{rad}((1-\tau)v+\tau \tilde{w}):v\in A_0, 0\le \tau\le 1\},
	\end{equation*}
	\begin{equation*}
		A^*=\{u_t:u\in A_1, r\le t\le R\},
	\end{equation*}
	\begin{equation*}
		B^*=\{w_t: w\in B_0, 0\le t\le \rho\},
	\end{equation*}
	\begin{equation*}
		A=\{u_r:u\in A_1\}\cup \{v_t:v\in A_0,r\le t\le R\}\cup\{u_R:u\in A_1\},
	\end{equation*}
	\begin{equation*}
		B=\{w_{\rho}:w\in B_0\},
	\end{equation*}
	we have that 
	\begin{equation*}
		a<\inf _{B^*} E,\ \ \ \sup_AE<\inf _B E, \ \ \ \sup_{A^*} E<b.
	\end{equation*} 
	If $E$ satisfies the {\em \PS{}} condition for all $c\in (a,b)$, then $E$ has a pair of critical points $u_1,u_2$ with 
	\begin{equation*}
		\inf _{B^*}E\le E(u_1)\le \sup_A E, \ \ \ \inf _BE\le E(u_2)\le \sup_{A^*} E.
	\end{equation*}
\end{theorem}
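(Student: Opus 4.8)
The plan is to produce two distinct critical values of $E$ lying in $(a,b)$ by a pair of dual linking principles — a min--max over maps of $A^*$ fixing $A$, and a max--min over maps of $B^*$ fixing $B$ — and to extract the critical points from the deformation lemma via the \PS{} hypothesis.

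\emph{Polar coordinates.} I would first exploit the standing hypotheses $(H_1),(H_3),(H_6),(H_{10}),(H_{12})$, already used before the statement to make $\mathcal{M}_s$ a complete, symmetric, bounded $C^1$-Finsler manifold and $\pi_{rad}$ well defined: since $I_s(u_t)=t^sI_s(u)$, the map $\mathcal{M}_s\times(0,\infty)\to W\setminus\set{0}$, $(v,t)\mapsto v_t$, is a homeomorphism with inverse $w\mapsto\seq{w_{1/\varrho(w)},\varrho(w)}$, $\varrho(w):=I_s(w)^{1/s}$, so every $w\ne0$ has an angular part $\pi(w)\in\mathcal{M}_s$ and a radius $\varrho(w)>0$. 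In these coordinates $A^*$ has angular part $A_1$ and radii in $[r,R]$; $A$ is its relative boundary (angular part $A_0$ over radii $[r,R]$, together with angular part $A_1$ at the radii $r$ and $R$); $B^*$ is the cone from the origin over $B_0$ over radii $[0,\rho]$; $B$ is $B_0$ at radius $\rho$. Record also: $B$ and $B^*$ are symmetric (as $B_0$ is), $A$ contains the symmetric core $\set{v_t:v\in A_0,\ r\le t\le R}$, $A_0\cap B_0=\emptyset$, and $r<\rho<R$.

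\emph{The two minimax values.} Set $\Psi=\set{\psi\in C(B^*,W):\restr{\psi}{B}=\mathrm{id}}$, $\Gamma=\set{\gamma\in C(A^*,W):\restr{\gamma}{A}=\mathrm{id}}$ and
\[
c_1=\sup_{\psi\in\Psi}\,\inf_{B^*}E\circ\psi,\qquad c_2=\inf_{\gamma\in\Gamma}\,\sup_{A^*}E\circ\gamma .
\]
Taking $\psi=\mathrm{id}$, $\gamma=\mathrm{id}$ gives $c_1\ge\inf_{B^*}E$, $c_2\le\sup_{A^*}E$. The opposite inequalities $c_1\le\sup_AE$ and $c_2\ge\inf_BE$ follow from the linking lemmas stated below: if every $\psi\in\Psi$ meets $A$ then $\inf_{B^*}E\circ\psi\le\sup_AE$, and if every $\gamma\in\Gamma$ meets $B$ then $\sup_{A^*}E\circ\gamma\ge\inf_BE$. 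With the hypotheses we then get
\[
a<\inf_{B^*}E\le c_1\le\sup_AE<\inf_BE\le c_2\le\sup_{A^*}E<b ,
\]
so $a<c_1<c_2<b$; in particular $c_1,c_2\in(a,b)$. If $c_2$ were a regular value, then, $E$ satisfying \PS{c_2} and $c_2\in(a,b)$, the quantitative deformation lemma would furnish (for $\eps>0$ so small that $\sup_AE<c_2-\eps$) a deformation $\eta$ of $W$, equal to the identity off $E^{-1}([c_2-\eps,c_2+\eps])$ and with $\eta(\set{E\le c_2+\eps})\subset\set{E\le c_2-\eps}$; then $\restr{\eta}{A}=\mathrm{id}$, so composing with a $\gamma_0\in\Gamma$ having $\sup_{A^*}E\circ\gamma_0<c_2+\eps$ yields $\eta\circ\gamma_0\in\Gamma$ with $\sup_{A^*}E\circ(\eta\circ\gamma_0)\le c_2-\eps<c_2$, a contradiction. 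Applying the deformation lemma to $-E$ at $c_1$ (now with $\eps$ so small that $c_1+\eps<\inf_BE$, so that $\restr{\eta}{B}=\mathrm{id}$) contradicts the definition of $c_1$ in the same way. Hence $c_1,c_2$ are critical values; choosing critical points $u_1,u_2$ at these levels gives the required estimates, and $u_1\ne u_2$ because $E(u_1)\le\sup_AE<\inf_BE\le E(u_2)$.

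\emph{The linking lemmas (the crux).} It remains to show (i) $\gamma(A^*)\cap B\ne\emptyset$ for every $\gamma\in\Gamma$, and (ii) $\psi(B^*)\cap A\ne\emptyset$ for every $\psi\in\Psi$. For (i) I would slice at radius $\rho$: put $\Theta=I_s\circ\gamma$ on $A^*$; since $\restr{\gamma}{A}=\mathrm{id}$, $\Theta$ equals $r^s$ on the bottom cap, $R^s$ on the top cap, and $t^s$ on the core, so $Z:=\Theta^{-1}(\rho^s)$ separates the caps and meets the core exactly in the symmetric set $\set{v_\rho:v\in A_0}$, on which $\pi\circ\gamma$ is the inclusion $A_0\hookrightarrow\mathcal{M}_s$ (using $(H_1),(H_3)$ and $I_s(v_\rho)=\rho^s$). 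Since $B=\set{x:I_s(x)=\rho^s,\ \pi(x)\in B_0}$, the assumption $\gamma(A^*)\cap B=\emptyset$ would force $\pi\circ\gamma(Z)\subset\mathcal{M}_s\setminus B_0$. Extending $\gamma$ oddly across the symmetric core (legitimate, the identity being odd) makes $Z$ symmetric; the cone construction — which caps $A_0$ off through $\tilde w\in\mathcal{M}_s\setminus A_0$ — together with the behaviour of the $\Z_2$-cohomological index under suspension forces this doubled separating set to have index $\ge i(A_0)+1=k+1$, so $\pi\circ\gamma$ (doubled) would be an odd map of a space of index $\ge k+1$ into $\mathcal{M}_s\setminus B_0$, contradicting $i(\mathcal{M}_s\setminus B_0)=k$. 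Statement (ii) is dual: slice $B^*$ at radius $r$ and use $i(A_0)=k$ to obstruct capping $B_0$ off inside $\mathcal{M}_s\setminus A_0$. The main obstacle is precisely this equivariant-topological step — essentially the content behind Theorem~\ref{Theorem 7}(iii) — namely turning the level set $Z$ into a genuine odd map between spaces whose $\Z_2$-indices can be compared: one must handle the continuity/parametrisation of the separating set, the degeneracy of the angular projection at the cone vertex and at the origin, and the exact mechanism by which $\tilde w$ raises the index from $k$ to $k+1$. Everything else is the routine min--max/deformation scaffolding above.
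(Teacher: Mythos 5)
Your minimax/deformation scaffolding (the two dual minimax values $c_1,c_2$, the choice of $\eps$, the distinctness of the two critical points) is standard and sound, but the proposal has a genuine gap exactly where you flag it: the two ``linking lemmas'' (i) and (ii) are never proved, and they are the entire content of the theorem. For (i), the sketch does not work as written: $\gamma$ is defined only on $A^*$, which is not a symmetric set (only its core $\{v_t:v\in A_0,\ r\le t\le R\}$ is, since $A_1$ is a cone through $\tilde w$), so ``extending $\gamma$ oddly across the symmetric core'' is not well defined wherever $A_1\cap(-A_1)$ is larger than $A_0$; and the key assertion that the doubled separating slice $Z$ has $\Z_2$-index at least $k+1$ is precisely the Borsuk--Ulam-type piercing statement that needs a proof, which you do not give (nor address how to parametrize or symmetrize a mere level set of $I_s\circ\gamma$). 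For (ii) the strategy is flawed in principle: maps $\psi\in\Psi$ fix only $B$, not the vertex of the cone $B^*$ (the origin is free to move), so $I_s\circ\psi$ is pinned at only one end and there is no two-sided constraint producing a separating level set ``at radius $r$''; the proposed obstruction via ``capping $B_0$ off inside $\mathcal{M}_s\setminus A_0$'' has no starting point. It is not even clear that the dual homotopical linking (ii) holds in this abstract generality, and the paper is structured precisely so as never to need it.

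For comparison: the paper's proof is entirely cohomological. Lemma \ref{defa2} and Theorem \ref{HomLin} establish that the inclusion $A\hookrightarrow W\setminus B$ is nontrivial on $\widetilde H^{k}$, using explicit deformation retractions built from the scaling (contractibility of $A_2$, $A^*$, and $W\setminus B^*$) together with \cite[Proposition 2.14]{MR2640827}; then, for $\alpha<\beta<\gamma$ squeezed between the given bounds, exact sequences give $H^{k}(E^\beta,E^\alpha)\neq0$ and $H^{k+1}(E^\gamma,E^\beta)\neq0$, and the critical points follow from the \PS{} condition via the (second) deformation lemma. Your statement (i) would in fact follow from Theorem \ref{HomLin} plus contractibility of $A^*$ (a $\gamma:A^*\to W\setminus B$ fixing $A$ would factor $\iota^*$ through $\widetilde H^{k}(A^*)=0$), but then you must still prove Theorem \ref{HomLin}, i.e.\ the very index-theoretic step you leave open. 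For the lower critical value the paper avoids any linking of $B^*$ against $A$ altogether: it only uses contractibility of $W\setminus B^*$ to show $\widetilde H^{k}(E^\beta)\to\widetilde H^{k}(E^\alpha)$ is not injective. That asymmetric device is exactly the piece your dual minimax cannot supply, so as it stands the proposal does not prove the theorem.
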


We will show that $\Phi_\lambda$ satisfies the geometry of Theorem \ref{thmlinking}.

The main result concerning equation \eqref{18} is

\begin{theorem}\label{thmabstract} Assume $(H_1)-(H_{13})$, $(F_1)-(F_3)$, $(G_1)-(G_3)$, $\lambda>0$ and that $\Phi_\lambda$ satisfies the {\em \PS{}} condition. Then, there exists $\mu^*>0$ such that, if $\mu\in(0,\mu^*)$, equation \eqref{18} has two solutions $u_1,u_2$ such that $\Phi_\lambda(u_1)<0<\Phi_\lambda(u_2)$.
\end{theorem}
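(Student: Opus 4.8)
The plan is to apply the abstract linking theorem, Theorem~\ref{thmlinking} (whose structural hypotheses are among $(H_1)$--$(H_{13})$), to $E = \Phi_\lambda$. On $\M_s$ one has $I_s - \lambda\, J_s = 1 - \lambda/\widetilde{\Psi}$, so a sublevel set $\widetilde{\Psi}^a$ with $a \le \lambda$ lies in $\set{I_s - \lambda\, J_s \le 0}$ and a superlevel set $\widetilde{\Psi}_b$ with $b > \lambda$ lies in $\set{I_s - \lambda\, J_s \ge 1 - \lambda/b}$. Using the index computations of Theorem~\ref{Theorem 7} one can choose $a \le \lambda < b$ with $i(\widetilde{\Psi}^a) = i(\M_s \setminus \widetilde{\Psi}_b) =: k < \infty$: if $\lambda$ is not an eigenvalue of \eqref{1} take $\lambda_k < a < \lambda < b < \lambda_{k+1}$, and in the resonant case $\lambda \in \set{\lambda_k}$ one simply pushes $a, b$ past the eigenvalue. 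Put $A_0 = \widetilde{\Psi}^a$, $B_0 = \widetilde{\Psi}_b$, and pick any $\tilde w \in \M_s \setminus A_0$; then $A_0 \cap B_0 = \emptyset$, $i(A_0) = i(\M_s \setminus B_0) = k$, $I_s - \lambda\, J_s \le 0$ on $A_0$, and $I_s - \lambda\, J_s \ge \eps := 1 - \lambda/b > 0$ on $B_0$. (When $\lambda < \lambda_1$ we have $k = 0$, $A_0 = \emptyset$, and the scheme degenerates to the classical mountain pass, with only obvious changes.)

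Next I would record the weak compactness needed to use $(F_1)$ and $(G_3)$ uniformly. Since $J_s$ is weakly sequentially continuous, $J_s(0) = 0$, and $\widetilde{\Psi} \ge \lambda_1 > 0$, the set $A_0$ lies in the bounded, weakly closed set $\set{J_s \ge 1/a}$, so $\overline{A_0}^{\,w}$ is weakly compact and avoids $0$ ($W$ being reflexive). The same is true of the cone $A_1$ of Theorem~\ref{thmlinking} (radial projections of convex combinations of $A_0$ with $\tilde w$): a convex combination of $A_0$ with $\tilde w$ cannot be $0$ (by $(H_{12})$, weak continuity of $J_s$, $(H_{13})$, and $\tilde w \notin A_0$), coercivity of $I_s$ keeps $\pi_{rad}$ bounded and bounded away from $0$ in norm, and combining these one gets $\inf_{A_1} J_s > 0$, hence $\overline{A_1}^{\,w}$ weakly compact and avoiding $0$. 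The scaled families $\set{v_t : v \in A_0,\, r \le t \le R}$ and $\set{u_t : u \in A_1,\, r \le t \le R}$ lie in sets $\set{J_s \ge c}$ with $c > 0$, so they too are bounded, weakly compact, and bounded away from $0$.

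Then I would verify the three inequalities of Theorem~\ref{thmlinking}, fixing the parameters along the way. Choose $\rho > 0$ small enough that $(G_1)$ gives $G(w_\rho) \le (\eps/4)\, \rho^s$ for all $w \in B_0$, and set $\mu^* = \eps\, \rho^s/(4 C_F)$ with $C_F = \sup\set{F(w_\rho) : w \in B_0} < \infty$; for $\mu \in (0, \mu^*)$ this forces $\Phi_\lambda \ge (\eps/2)\, \rho^s > 0$ on $B = \set{w_\rho : w \in B_0}$, i.e.\ $\inf_B \Phi_\lambda > 0$. Let $C_1 = \sup\set{\, |1 - \lambda J_s(u)| : u \in A_1} \in (0, \infty)$. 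Using $(G_3)$ on $\overline{A_1}^{\,w}$, pick $R > \rho$ with $G(u_R) \ge 2 C_1 R^s$ on $A_1$, so $\Phi_\lambda \le -C_1 R^s < 0$ on the top face $\set{u_R : u \in A_1}$ of $A$; since $I_s - \lambda\, J_s \le 0$ on $A_0$ and $F, G > 0$ off $0$, $\Phi_\lambda \le -\mu\,\inf\set{F(v_t) : v \in A_0,\, r \le t \le R} < 0$ on the side face; and, for the chosen $\mu$, using $(F_1)$ on $\overline{A_1}^{\,w}$, pick $r \in (0, \rho)$ with $\mu F(u_r) \ge 2 C_1 r^s$ on $A_1$, so $\Phi_\lambda \le -C_1 r^s < 0$ on the bottom face. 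Hence $\sup_A \Phi_\lambda < 0 < \inf_B \Phi_\lambda$. As $A^*, B^*$ are bounded and $G \ge 0$ (and $(G_1)$ bounds $G$ on $B^*$), $\Phi_\lambda$ is bounded above on $A^*$ and below on $B^*$, so one fixes $a_0 < \inf_{B^*}\Phi_\lambda$, $b_0 > \sup_{A^*}\Phi_\lambda$ with $a_0 < b_0$; $\Phi_\lambda$ satisfies \PS{} at every level by hypothesis. Theorem~\ref{thmlinking} now yields critical points $u_1, u_2$ of $\Phi_\lambda$ with $\Phi_\lambda(u_1) \le \sup_A \Phi_\lambda < 0$ and $\Phi_\lambda(u_2) \ge \inf_B \Phi_\lambda > 0$, i.e.\ two solutions of \eqref{18} with $\Phi_\lambda(u_1) < 0 < \Phi_\lambda(u_2)$.

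The hard part is carrying the three inequalities at once. There is a genuine tension: positivity of $\Phi_\lambda$ on $B$ forces $\mu$ small, while on the bottom face of $A$ the term $\mu F$ must dominate the (possibly positive) contribution of $I_s - \lambda\, J_s$ from the $\tilde w$-direction; this is reconciled only by separating the three scales $r \ll \rho \ll R$ and using that $(F_1)$ makes $F(u_r)/r^s \to \infty$ as $r \to 0^+$ and $(G_3)$ makes $G(u_R)/R^s \to \infty$ as $R \to \infty$. Making these limits \emph{uniform} over the relevant sets is precisely where the weak compactness above is used, the delicate point being that $\overline{A_1}^{\,w}$ avoids $0$ (which rests on weak continuity of $J_s$, the monotonicity $(H_{12})$, and reflexivity of $W$). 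A minor secondary issue is the index bookkeeping in the resonant case $\lambda \in \set{\lambda_k}$, i.e.\ matching $i(\widetilde{\Psi}^\lambda)$ against $i(\M_s \setminus \widetilde{\Psi}_b)$ for $b$ just above $\lambda$.
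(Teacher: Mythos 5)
Your proposal follows the paper's own route: the same choice of $A_0,B_0$ as sub/superlevel sets of $\widetilde{\Psi}$ (the paper simply takes $A_0=\widetilde{\Psi}^{\lambda_k}$, $B_0=\widetilde{\Psi}_{\lambda_{k+1}}$ with $\lambda\in[\lambda_k,\lambda_{k+1})$, which disposes of your ``resonant case'' worry directly via Theorem \ref{Theorem 7}(iii)), the same identity $\Phi_\lambda(u_t)=t^s\bigl(1-\lambda J_s(u)\bigr)-\mu F(u_t)-G(u_t)$ on $\M_s$, the same order of choices ($\rho$ and $\mu^*$ from $(G_1)$ on $B_0$, then $r$ from $(F_1)$ and $R$ from $(G_3)$ on $A_1$, then the side piece $\set{v_t: v\in A_0,\ r\le t\le R}$), and the same final appeal to Theorem \ref{thmlinking}.

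The one place where your argument as written does not close is precisely the content of the paper's Lemma \ref{wcs}: the claim that $\inf_{A_1}J_s>0$ (equivalently, that the weak closure of $A_1$ avoids $0$), which is what makes the limits in $(F_1)$ and $(G_3)$ uniform on $A_1$ and gives the uniform negativity of $\mu F$ on the side piece. Your stated justification --- that no convex combination $(1-\tau)v+\tau\tilde w$ with $v\in A_0$ vanishes, and that $\pi_{rad}$ stays bounded and norm-bounded away from $0$ --- does not yield this: $J_s$ is only weakly sequentially continuous, so a norm lower bound gives no lower bound on $J_s$, and a pointwise nonvanishing statement does not control weak limits of sequences in $A_1$. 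The missing argument is sequential: if $J_s\bigl(\pi_{rad}((1-\tau_n)v_n+\tau_n\tilde w)\bigr)\to 0$ with $v_n\in A_0$, $\tau_n\in[0,1]$, then the radial scaling factors are bounded below (the combinations are bounded and $I_s$ is continuous with $I_s(0)=0$), so by $(H_{12})$ one gets $J_s\bigl(C((1-\tau_n)v_n+\tau_n\tilde w)\bigr)\to 0$ for some fixed $C>0$; passing to weak limits and using $(H_9)$, the limit combination $(1-\tau)v+\tau\tilde w$ must be $0$, while $v\ne 0$ because $J_s(v)=\lim J_s(v_n)\ge 1/\lambda_k$; hence $v=c\tilde w$, with $|c|\le 1$ forced by weak lower semicontinuity of $I_s$ $(H_{13})$ together with $(H_{12})$, and then $\lambda_k\ge\lim\widetilde{\Psi}(v_n)=1/J_s(|c|\tilde w)\ge 1/J_s(\tilde w)=\widetilde{\Psi}(\tilde w)>\lambda_k$, a contradiction. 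You do cite exactly the hypotheses this argument uses and flag the point as delicate, so the gap is one of argument rather than of idea; but the inference ``combining these one gets $\inf_{A_1}J_s>0$'' needs the above proof to stand. Everything else (the estimate on $B$ fixing $\rho,\mu^*$, the three faces of $A$, the bounds on $A^*,B^*$, and the application of Theorem \ref{thmlinking}) matches the paper's proof.
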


\begin{remark}\label{rmkgene} Theorem \ref{thmabstract} is a generalization of \cite[Theorem 1.2]{Pe} under the more general scaling $u_t$. Indeed, all applications in \cite{Pe} concerning subcritical equations can also be deduced from Theorem \ref{thmabstract}. It is sufficient to consider the scaling $u_t=tu$. Therefore, as stated in the introduction, the theory can be applied, for example, to equations involving the $p$-Laplacian, Kirchhoff, or fractional $p$-Laplacian operators.
	
\end{remark}

\section{Proof of Theorem \ref{thmabstract}}

This section is devoted to the proof of Theorem \ref{thmabstract}. We start with some useful properties concerning the radial projection.

\begin{proposition}\label{radialprojection} Assume $(H_6)$, $(H_{10})$ and $(H_{12})$. Then, for each $u \in W \setminus \set{0}$, there exists a unique $t(u)>0$ such that $I_s(t(u)u)=1$. Moreover, the mapping $W \setminus \set{0} \to \M_s,\, u \mapsto \pi_{rad}(u):=t(u)u$ is continuous.
\end{proposition}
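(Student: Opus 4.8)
The plan is to establish the two claims in turn: first the pointwise existence and uniqueness of $t(u)$ by a one-variable intermediate value argument, and then continuity of $u \mapsto \pi_{rad}(u)$ by a standard subsequence extraction.

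For the first claim, fix $u \in W \setminus \set{0}$ and set $h_u(t) = I_s(tu)$ for $t \ge 0$. Since $I_s$ is continuous and $t \mapsto tu$ is continuous, $h_u$ is continuous on $[0,\infty)$; moreover $h_u(0) = I_s(0) = 0$, by $(H_{12})$ the restriction $\restr{h_u}{(0,\infty)}$ is strictly increasing (in particular $h_u > 0$ on $(0,\infty)$, which is also immediate from $(H_6)$ and $I_s(u) = \int_0^1 \As(\tau u) u\, d\tau$), and by coercivity $(H_{10})$ together with $\norm{tu} = t \norm{u} \to \infty$ we get $h_u(t) \to \infty$ as $t \to \infty$. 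Thus $h_u$ takes the value $1$ at some $t(u) > 0$ by the intermediate value theorem, and strict monotonicity makes $t(u)$ unique; by construction $\pi_{rad}(u) = t(u)\, u \in \M_s$, so $\pi_{rad}$ is a well-defined map $W \setminus \set{0} \to \M_s$.

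For continuity, let $u_n \to u$ in $W \setminus \set{0}$; it suffices to prove $t(u_n) \to t(u)$, since then $\pi_{rad}(u_n) = t(u_n)\, u_n \to t(u)\, u = \pi_{rad}(u)$. First, $\seq{t(u_n)}$ is bounded: if $t(u_{n_k}) \to \infty$ along a subsequence, then, because $\norm{u_n} \to \norm{u} > 0$, we would have $\norm{t(u_{n_k})\, u_{n_k}} \to \infty$ and hence $I_s(t(u_{n_k})\, u_{n_k}) \to \infty$ by $(H_{10})$, contradicting $I_s(t(u_{n_k})\, u_{n_k}) = 1$. Second, $\seq{t(u_n)}$ is bounded away from $0$: if $t(u_{n_k}) \to 0$ along a subsequence, then $t(u_{n_k})\, u_{n_k} \to 0$ (as $\seq{u_{n_k}}$ is bounded), so by continuity of $I_s$ we would get $1 = I_s(t(u_{n_k})\, u_{n_k}) \to I_s(0) = 0$, again a contradiction. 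Now given any subsequence of $\seq{t(u_n)}$, extract a further subsequence $t(u_{n_k}) \to t_0 \in (0,\infty)$; then $t(u_{n_k})\, u_{n_k} \to t_0\, u$, and continuity of $I_s$ gives $I_s(t_0\, u) = \lim_k I_s(t(u_{n_k})\, u_{n_k}) = 1$, so $t_0 = t(u)$ by the uniqueness already proved. Since every subsequence of $\seq{t(u_n)}$ has a further subsequence converging to $t(u)$, the whole sequence converges to $t(u)$.

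I expect the first paragraph to be entirely routine. The only place requiring a little care is the continuity argument, and there the crux is the two-sided a priori control of $t(u_n)$: it cannot blow up (coercivity of $I_s$ together with $\norm{u} > 0$) and the $I_s$-level cannot collapse to $0$ (boundedness of $\seq{u_n}$ and continuity of $I_s$ at the origin). With those bounds in hand, the ``every subsequence has a sub-subsequence with the same limit'' device closes the proof with no further hypotheses; I foresee no genuine obstacle, only the minor point that $(H_{12})$ is phrased on the open half-line, so $h_u(0) = 0$ and positivity of $h_u$ on $(0,\infty)$ are read off from $(H_6)$ and the normalization $I_s(0) = 0$.
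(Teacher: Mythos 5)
Your proof is correct and follows essentially the same route as the paper: existence and uniqueness of $t(u)$ from $I_s(0)=0$, $(H_6)$, $(H_{12})$, and coercivity $(H_{10})$, then continuity by showing $t(u_n)$ stays away from $0$ and $\infty$ and passing to a convergent subsequence whose limit must equal $t(u)$ by uniqueness. Your write-up is merely more detailed (explicit subsequence-of-subsequence device) than the paper's terse version.
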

\begin{proof} Since $I_s(0)=0$ and $\lim_{t\to \infty}I_s(tu)=\infty$ by $(H_{10})$ it follows by $(H_6)$ and $(H_{12})$ that there exists a unique $t(u)>0$ such that $I_s(t(u)u)=1$. We claim that $W \setminus \set{0} \to \mathbb{R},\, u \mapsto t(u)$ is continuous, which implies that $\pi_{rad}$ is continuous. Indeed, suppose that $u_n\to u\in W\setminus\{0\}$. Since $I_s(0)=0$ it follows that $t(u_n)$ does not converge to $0$, then, by $(H_{10})$ we can assume that $t(u_n)\to t>0$ and hence $I_s(tu)=\lim_{n\to \infty}I_s(t(u_n)u_n)=1$ which implies, by uniqueness, that $t=t(u)$, and the proof is complete.
	
\end{proof}
There is also a natural projection associated with the scaling defined in Definition \ref{dscaling}. For $u \in W \setminus \set{0}$, since \eqref{4} holds, we can set
\begin{equation} \label{1000}
	t_u = I_s(u)^{-1/s}, \quad \pi(u) = u_{t_u}.
\end{equation}

\begin{proposition}\label{pprojection} Assume $(H_1),(H_3)$ and $(H_6)$. The mapping $W \setminus \set{0} \to \M_s,\, u \mapsto \pi(u)$ is continuous. Moreover $\pi_{|\M_s}$ is the identity map and, if $\pi(u)=\pi(v)$, then there exists $t>0$ such that $u=v_t$.
\end{proposition}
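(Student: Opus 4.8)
The plan is to establish the three assertions in turn, using only the scaling axioms $(H_1)$, $(H_3)$ together with the positivity $(H_6)$ (which, via \eqref{4}, makes $t_u = I_s(u)^{-1/s}$ well defined on $W\setminus\{0\}$). First, for continuity of $\pi$, I would note that $u\mapsto I_s(u)$ is continuous (indeed $C^1$, being a potential) and strictly positive away from the origin, so $u\mapsto t_u = I_s(u)^{-1/s}$ is continuous on $W\setminus\{0\}$; composing with the jointly continuous scaling map $(u,t)\mapsto u_t$ from Definition \ref{dscaling} gives that $u\mapsto \pi(u)=u_{t_u}$ is continuous. I should also check $\pi(u)\in\M_s$: by the scaling property \eqref{3}, $I_s(u_{t_u}) = t_u^{\,s}\,I_s(u) = I_s(u)^{-1}\,I_s(u) = 1$, so $\pi$ indeed maps into $\M_s$.

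Next, that $\pi$ restricted to $\M_s$ is the identity: if $u\in\M_s$ then $I_s(u)=1$, hence $t_u = 1^{-1/s}=1$, and $u_1 = u$ by $(H_3)$, so $\pi(u)=u$.

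Finally, for the last claim, suppose $\pi(u)=\pi(v)$, i.e. $u_{t_u} = v_{t_v}$. Apply the scaling by $1/t_u$ to both sides and use $(H_1)$: $(u_{t_u})_{1/t_u} = u_{t_u\cdot(1/t_u)} = u_1 = u$ by $(H_3)$, while $(v_{t_v})_{1/t_u} = v_{t_v/t_u}$, again by $(H_1)$. Hence $u = v_t$ with $t = t_v/t_u > 0$. I expect no real obstacle here; the only points requiring a word of care are the invocation of joint continuity of the scaling (built into Definition \ref{dscaling}) and the fact that $s>0$ so that the exponent $-1/s$ makes sense and $t\mapsto t^s$ is a strictly increasing bijection of $(0,\infty)$ — both are available by hypothesis.
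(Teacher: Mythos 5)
Your proposal is correct and follows essentially the same route as the paper's proof: continuity from the continuity of $I_s$ and of the scaling map, the identity on $\M_s$ from $t_u=1$ together with $(H_3)$, and the last claim by rescaling $u_{t_u}=v_{t_v}$ with $t_u^{-1}$ via $(H_1)$ to get $u=v_{t_v t_u^{-1}}$. The only (harmless) addition is your explicit check via \eqref{3} that $\pi(u)\in\M_s$, which the paper leaves implicit.
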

\begin{proof} Condition $(H_6)$ guarantees that $\pi$ is well define (see \eqref{4}). Since $I_s$ is continuous, it is clear that $\pi$ is also continuous as the scaling is continuous. Note that if $u\in \mathcal{M}_s$, then $t_u=I_s(u)^{-1/s}=1$, and hence, by $(H_3)$, we conclude that $\pi(u)=u_1=u$, which implies that $\pi_{|\M_s}$ is the identity map. To conclude the proof suppose that $w:=u_{t_u}=v_{t_v}$. Then $w_{t_u^{-1}}=(u_{t_u})_{t_u^{-1}} $ and hence, by $(H_1)$, it follows that $w_{t_u^{-1}}=u$, which implies that $v_{t_vt_u^{-1}}=u$, and the proof is complete.
\end{proof}

Before proving Theorem \ref{thmlinking} we need some technical results.

\begin{lemma}\label{defa2} Assume $(H_1),(H_3)$, and $(H_6)$. Let $A_0,B_0\subset \mathcal{M}_s$ be such that $A_0\cap B_0=\emptyset$ and
	\begin{equation*}
		i(A_0)=i(\mathcal{M}_s\setminus B_0)=k<\infty.
	\end{equation*}
	Fix $R>0$ and set 
	\begin{equation*}
		D_1=\{v_R:v\in A_0\},\ D_2=\{w_t:w\in B_0, t\ge 0\}.
	\end{equation*}
	Then $D_1$ links $D_2$ cohomologically in dimension $k-1$, that is, the homomorphism $\iota^\ast:	\widetilde{H}^{k-1}(W\setminus D_2)\to 	\widetilde{H}^{k-1}(D_1)$, induced by the inclusion $D_1 \hookrightarrow W\setminus D_2$ is non trivial.
\end{lemma}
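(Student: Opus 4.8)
The plan is to reduce the cohomological linking of $D_1$ and $D_2$ to the known index relation $i(A_0) = i(\mathcal{M}_s \setminus B_0) = k$ via two homotopy equivalences induced by the scaling and the projection $\pi$. First I would observe that $D_1 = \{v_R : v \in A_0\}$ is the image of $A_0$ under the homeomorphism $u \mapsto u_R$ of $\mathcal{M}_s$ onto the level set $\{I_s = R^s\}$ (using $(H_1)$, $(H_3)$ and \eqref{3}); in particular $i(D_1) = i(A_0) = k$ since the cohomological index is invariant under odd homeomorphisms. The key structural point is that $W \setminus D_2$ deformation retracts onto something whose complement-linking behavior is governed by $\mathcal{M}_s \setminus B_0$. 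Indeed, $D_2 = \{w_t : w \in B_0,\ t \ge 0\}$ is precisely the ``cone'' over $B_0$ under the scaling (it contains $0$ since $w_0 = 0$ by $(H_3)$), and $\pi^{-1}(B_0) = \{w_t : w \in B_0,\ t > 0\} = D_2 \setminus \{0\}$ by Proposition \ref{pprojection}. Hence $W \setminus D_2 = (W \setminus \{0\}) \setminus \pi^{-1}(B_0)$, and since $\pi : W \setminus \{0\} \to \mathcal{M}_s$ is continuous with $\pi^{-1}(B_0)$ removed, $\pi$ restricts to a continuous map $W \setminus D_2 \to \mathcal{M}_s \setminus B_0$.

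Next I would build an explicit deformation retraction of $W \setminus D_2$ onto $\mathcal{M}_s \setminus B_0$: using the straight-line-then-rescale homotopy, or more cleanly the scaling homotopy $h(u, \tau) = u_{(1-\tau)\cdot 1 + \tau t_u}$ which moves $u$ along its scaling orbit to $\pi(u) = u_{t_u} \in \mathcal{M}_s$, one checks that this stays in $W \setminus D_2$ because the scaling orbit of any $u \notin D_2$ never meets $D_2$ (if some $u_{\tau'}$ with $\tau' > 0$ landed in $D_2$, then by $(H_1)$ the whole orbit of $u$ would, contradicting $u \notin D_2$; and $u_0 = 0 \notin W \setminus D_2$ is avoided since $t_u > 0$). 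This gives a homotopy equivalence $W \setminus D_2 \simeq \mathcal{M}_s \setminus B_0$, so $\widetilde{H}^{k-1}(W \setminus D_2) \cong \widetilde{H}^{k-1}(\mathcal{M}_s \setminus B_0)$. Under this identification the inclusion-induced map $\iota^* : \widetilde{H}^{k-1}(W \setminus D_2) \to \widetilde{H}^{k-1}(D_1)$ corresponds, via the homeomorphism $A_0 \cong D_1$, to the map $\widetilde{H}^{k-1}(\mathcal{M}_s \setminus B_0) \to \widetilde{H}^{k-1}(A_0)$ induced by the inclusion $A_0 \hookrightarrow \mathcal{M}_s \setminus B_0$ (which is a genuine inclusion because $A_0 \cap B_0 = \emptyset$).

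Finally I would invoke the standard property of the $\Z_2$-cohomological index (Fadell–Rabinowitz) that if $A_0, B_0 \subset \mathcal{M}_s$ are symmetric, disjoint, and $i(A_0) = i(\mathcal{M}_s \setminus B_0) = k < \infty$, then $A_0$ links $\mathcal{M}_s \setminus B_0$ cohomologically in dimension $k - 1$ — that is, the restriction $\widetilde{H}^{k-1}(\mathcal{M}_s \setminus B_0) \to \widetilde{H}^{k-1}(A_0)$ is nontrivial. (This is the $\dim = k-1$ class realizing the index being detected on $A_0$; it is exactly the cohomological-index version of the linking used in the index-theory-based linking theorems of Perera and collaborators.) Transporting nontriviality back through the two isomorphisms above yields that $\iota^*$ is nontrivial, which is the claim. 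I expect the main obstacle to be verifying cleanly that the scaling-orbit deformation retraction lands in $W \setminus D_2$ for all parameter values — this requires using $(H_1)$ carefully to see that the orbits of points outside $D_2$ are disjoint from $D_2$, together with ensuring the retraction never passes through $0$ — and, secondarily, pinning down the precise statement of the index-linking lemma so that it applies verbatim to the symmetric subsets $A_0$ and $\mathcal{M}_s \setminus B_0$ of the Finsler manifold $\mathcal{M}_s$.
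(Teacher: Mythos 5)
Your proposal is correct and follows essentially the same route as the paper: a commutative square comparing the inclusion $D_1\hookrightarrow W\setminus D_2$ with $A_0\hookrightarrow \mathcal{M}_s\setminus B_0$, using the homeomorphism $D_1\cong A_0$ and the scaling homotopy $u\mapsto u_{(1-\tau)+\tau t_u}$ to show $\pi|_{W\setminus D_2}$ is a homotopy equivalence, then invoking the index fact that the restriction $\widetilde{H}^{k-1}(\mathcal{M}_s\setminus B_0)\to\widetilde{H}^{k-1}(A_0)$ is nontrivial. The only difference is that the paper cites this last fact precisely as \cite[Proposition 2.14 (iv)]{MR2640827}, whereas you invoke it as a standard property of the Fadell--Rabinowitz index; your explicit orbit argument that the homotopy stays in $W\setminus D_2$ is a welcome detail the paper leaves implicit.
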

\begin{proof} 	Consider the commutative diagram 
	\[
	\begin{tikzcd}
		\widetilde{H}^{k-1}(\mathcal{M}_s\setminus B_0) \arrow[d, "\approx"] \arrow[r, "j^\ast"'] & 	\widetilde{H}^{k-1}(A_0) \arrow[d, "\approx"'] \\
		\widetilde{H}^{k-1}(W\setminus D_2)\arrow[r, "\iota^\ast"'] & 	\widetilde{H}^{k-1}(D_1) 
	\end{tikzcd}
	\]
	where $j: A_0\to \mathcal{M}_s\setminus B_0$ is the inclusion map and the vertical lines are isomorphisms induced by the homotopy equivalence $\pi_{|W\setminus D_2}: W\setminus D_2\to \mathcal{M}_s\setminus B_0$ and the homeomorphism $\pi_{|D_1}: D_1\to  A_0$. Indeed, $\pi_{|D_1}$ is a homeomorphism as a straightforward consequence of Proposition \ref{pprojection}.
	 
	  Let us show that $\pi_{|W\setminus D_2}$ is in fact a homotopy equivalence. By Proposition \ref{pprojection} we know that $\pi_{|\M_s}$ is the identity map and hence $\pi_{|W\setminus D_2}\circ \pi_{|\M_s}=\pi_{|\M_s\setminus B_0}:\M_s\setminus B_0\to \M_s\setminus B_0$ is the identity map. Now we claim that $ \pi_{|\M_s}\circ \pi_{|W\setminus D_2}:W\setminus D_2\to W\setminus D_2$ is homotopic to the identity map. Indeed, the map $W\setminus D_2\times [0,1]\to W\setminus D_2$, $(u,t)\mapsto$ $u_{(1-t)t_u+t}$ is the desired homotopy.
	
	Since $j^\ast\neq 0$ by \cite[Proposition 2.14 item (iv)]{MR2640827}, the proof is complete.
\end{proof}

\begin{theorem}\label{HomLin} Assume $(H_1),(H_3)$, $(H_6)$, $(H_{10})$ and $(H_{12})$. Let $A_0,B_0\subset \mathcal{M}_s$ be such that $A_0\cap B_0=\emptyset$ and
	\begin{equation*}
		i(A_0)=i(\mathcal{M}_s\setminus B_0)=k<\infty.
	\end{equation*}
	Let $\tilde{w}\in \mathcal{M}\setminus A_0$ and $0\le r<\rho<R$. Set $A_1=\{\pi_{rad}((1-\tau)v+\tau \tilde{w}):v\in A_0, 0\le \tau\le 1\}$ and
	\begin{equation*}
		A=\{u_r:u\in A_1\}\cup \{v_t:v\in A_0,r\le t\le R\}\cup\{u_R:u\in A_1\},\ B=\{w_{\rho}:w\in B_0\}.
	\end{equation*}
	Then $A$ links $B$ cohomologically in dimension $k$.
\end{theorem}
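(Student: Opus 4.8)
The plan is to reduce the cohomological linking of $A$ and $B$ in dimension $k$ to the linking of $D_1 = \{v_R : v \in A_0\}$ and $D_2 = \{w_t : w \in B_0,\, t \ge 0\}$ in dimension $k-1$, which is exactly Lemma \ref{defa2}. The mechanism to pass from dimension $k-1$ to dimension $k$ is a suspension/connecting-homomorphism argument: the set $A$ is built as the boundary of the ``cylinder-with-caps'' $A^* = \{u_t : u \in A_1,\, r \le t \le R\}$, and $B$ sits inside the region $B^*$ swept out by scaling $B_0$; the pair $(A^*, A)$ should behave, up to homotopy, like a suspension of $(D_1 \times \{R\}, D_1)$ relative to the obstruction set coming from $D_2$. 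Concretely, I would first record the topology of the pieces: by $(H_1),(H_3)$ and Proposition \ref{pprojection}, scaling gives homeomorphisms identifying $\{u_t : u \in A_1\}$ (for fixed $t>0$) with $A_1$, and the radial projection $\pi_{rad}$ (Proposition \ref{radialprojection}, using $(H_6),(H_{10}),(H_{12})$) lets me contract $A_1$ onto $\tilde w$ within $\mathcal{M}_s \setminus A_0$ — wait, more precisely $A_1$ is a cone-type object joining $A_0$ to $\tilde w$, so $A_1$ is contractible and $A_0 \hookrightarrow A_1$.

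Next I would set up the key commutative diagram. Linking in dimension $k$ means the map $\iota^* : \widetilde H^k(W \setminus B) \to \widetilde H^k(A)$ is nontrivial. I would exhibit a factorization of a nonzero element of $\widetilde H^k(A)$ through the coboundary $\delta : \widetilde H^{k-1}(A) \to \widetilde H^k(A^*, A)$ of the pair, or dually use the Mayer--Vietoris/long exact sequence of the triple $(A^*, A, D_1)$. The point is that $A = A^-\cup A^{\mathrm{lat}}\cup A^+$ with $A^\pm = \{u_{r}:u\in A_1\}, \{u_R:u\in A_1\}$ the two caps (each homeomorphic to the contractible $A_1$) and $A^{\mathrm{lat}} = \{v_t : v \in A_0,\, r \le t \le R\}$ the lateral boundary (homeomorphic to $A_0 \times [r,R]$, hence homotopy equivalent to $A_0$). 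A Mayer--Vietoris computation then gives $\widetilde H^k(A) \cong \widetilde H^{k-1}(A_0)$ via the connecting map, the isomorphism being realized by the scaling deformation that collapses the caps. Under this identification I must check that the restriction map from $\widetilde H^k(W\setminus B)$ corresponds, via the projections $\pi$ and $\pi_{rad}$ and the scaling homotopy $(u,t)\mapsto u_{(1-t)t_u+t}$ used in Lemma \ref{defa2}, to the map $\widetilde H^{k-1}(W \setminus D_2) \to \widetilde H^{k-1}(D_1)$, which is nonzero by Lemma \ref{defa2}. Here it is essential that $B = \{w_\rho : w\in B_0\} \subset D_2$ with $\rho$ strictly between $r$ and $R$, so that $W \setminus B$ retracts (by scaling toward $t=R$, i.e. past $\rho$) onto a space where the $D_2$-obstruction is seen, and that $\tilde w \notin A_0$ ensures the cone $A_1$ does not meet $D_2$-type bad directions — this is what makes the caps genuinely fillable on the ``$A$ side'' but not across $B$.

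The main obstacle I anticipate is the bookkeeping in the diagram chase: one must verify that the suspension isomorphism $\widetilde H^k(A)\cong \widetilde H^{k-1}(A_0)$ is compatible with the restriction homomorphisms on both sides, i.e. that the square
\[
\begin{tikzcd}
\widetilde H^{k-1}(W\setminus D_2) \arrow[r] \arrow[d, "\delta"] & \widetilde H^{k-1}(D_1) \arrow[d, "\approx"] \\
\widetilde H^{k}(W\setminus B) \arrow[r, "\iota^*"] & \widetilde H^{k}(A)
\end{tikzcd}
\]
commutes (up to sign), where the left vertical is a coboundary in the long exact sequence of $\bigl(W\setminus B,\, W\setminus D_2\bigr)$ or of an excisive pair associated with the ``slab'' $r\le t\le R$ around $B$. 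Establishing this requires carefully choosing compatible open covers / excision data indexed by the scaling parameter $t$, and checking that the homotopies produced by $(H_1)$ and Proposition \ref{pprojection} respect the relevant subspaces at every stage (in particular that $u_{(1-\tau)t_u+\tau}$ never lands in $B$ when started in $W\setminus B$, using $r<\rho<R$). Once this naturality square is in place, nontriviality of the top arrow (Lemma \ref{defa2}) plus injectivity of the right vertical forces $\iota^*\circ\delta\ne 0$, hence $\iota^*\ne 0$, which is precisely the assertion that $A$ links $B$ in dimension $k$.
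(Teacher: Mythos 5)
Your overall strategy is the right one and matches the paper's: reduce to Lemma \ref{defa2}, exploit the cylinder-with-caps structure of $A$ (caps homeomorphic to the contractible $A_1$, lateral part coming from $A_0$), and shift from dimension $k-1$ to $k$ by a connecting-homomorphism/excision argument. However, the central step you commit to --- the commutative square whose left vertical is a map $\widetilde H^{k-1}(W\setminus D_2)\to\widetilde H^{k}(W\setminus B)$ and whose right vertical is a ``suspension isomorphism'' $\widetilde H^{k-1}(D_1)\to\widetilde H^{k}(A)$ --- has a genuine gap: no such left vertical map is constructed, and the natural candidate does not work. Since $B\subset D_2$, the coboundary of the pair $(W\setminus B, W\setminus D_2)$ lands in the relative group $H^{k}(W\setminus B, W\setminus D_2)$, and composing it with the natural map to the absolute group $\widetilde H^{k}(W\setminus B)$ gives zero by exactness of that same long exact sequence. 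So the square either has an undefined arrow or, with the obvious definition, a zero left vertical, in which case commutativity would contradict (rather than use) the nontriviality coming from Lemma \ref{defa2}. The vague appeal to ``$W\setminus B$ retracts by scaling past $\rho$ onto a space where the $D_2$-obstruction is seen'' cannot repair this: $W\setminus B$ contains all of $D_2\setminus B$, and its relevant topology is visible only relative to $W\setminus B^*$.

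The paper's proof shows how the dimension shift must actually be organized: one never produces a single degree-raising map between absolute groups on the $B$-side, but instead transports nonvanishing through relative groups in a zig-zag. Concretely, with $A_2=\{u_r:u\in A_1\}\cup\{v_t:v\in A_0,\,r\le t\le R\}$, $A_3=D_1$, $A_4=\{u_R:u\in A_1\}$, $B_1=D_2$, $B_2=\{w_t:w\in B_0,\,t\ge\rho\}$: (i) Lemma \ref{defa2} gives nontriviality of $\widetilde H^{k-1}(W\setminus B_1)\to\widetilde H^{k-1}(A_3)$; (ii) contractibility of $A_2$ (strong deformation retraction $u\mapsto u_{1-t+rtI_s(u)^{-1/s}}$ onto $\{u_r:u\in A_1\}\cong A_1$, then the $\pi_{rad}$-cone to $\tilde w$) makes $\delta:\widetilde H^{k-1}(A_3)\to H^{k}(A_2,A_3)$ an isomorphism, so the relative map $H^{k}(W\setminus B_2,W\setminus B_1)\to H^{k}(A_2,A_3)$ is nonzero; (iii) excision of the closed set $\{w_t:t>\rho\}$ identifies $H^{k}(W\setminus B,W\setminus B^*)\cong H^{k}(W\setminus B_2,W\setminus B_1)$, so $H^{k}(W\setminus B,W\setminus B^*)\to H^{k}(A,A_4)$ is nonzero; (iv) only then does one pass to absolute cohomology, and only on the $A$-side, where contractibility of the cap $A_4$ makes $H^{k}(A,A_4)\to\widetilde H^{k}(A)$ an isomorphism, forcing $\iota^*\neq 0$. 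Your sketch names the right ingredients (excision data for the slab, contractible caps, compatibility of restrictions) but defers exactly this construction, and the specific square you propose cannot be the vehicle for it; as written the argument would fail at its key step.
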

\begin{proof}  Write 
	\begin{equation*}
		A_2=\{u_r:u\in A_1\}\cup \{v_t:v\in A_0, r\le t\le R\},\ A_3=\{v_R:v\in A_0\},
	\end{equation*}
	and
	\begin{equation*}
		B_1=\{w_t:w\in B_0, t\ge 0\},\ B_2=\{w_t:w\in B_0, t\ge \rho\}.
	\end{equation*}
	Consider the commutative diagram 
	\[
	\begin{tikzcd}
		\widetilde{H}^{k-1}(W\setminus B_2) \arrow[r, "{}"] \arrow[d, "{}"'] & \widetilde{H}^{k-1}(W\setminus B_1) \arrow[d, "\iota^\ast_1"] \arrow[r, "{}"] & H^{k}(W\setminus B_2,W\setminus B_1) \arrow[d, "\iota^\ast_2"] \arrow[r, "{}"] & \widetilde{H}^{k}(W\setminus B_2) \arrow[d, "{}"] \\
		\widetilde{H}^{k-1}(A_2) \arrow[r, "{}"']                            & \widetilde{H}^{k-1}(A_3) \arrow[r, "\delta"']                                 & H^{k}(A_2,A_3) \arrow[r, "{}"']                                                & \widetilde{H}^{k}(A_2)
	\end{tikzcd}.
	\]
	We first prove that $A_2$ is contractible. Indeed, we claim that the map $A_2\times [0,1]\to A_2$, $(u,t)\mapsto u_{1-t+rtI_s(u)^{-1/s}}$ is a strong deformation retraction of $A_2$ onto $\{u_r:u\in A_1\}$. In fact, the map is continuous because the scaling is continuous. If $w=u_r$ with $u\in A_1$, then $I_s(w)=r^s$ and hence $w_{1-t+rtI_s(w)^{-1/s}}=w_{1}=w\in A_2$. If $u=v_\tau$ with $v\in A_0$ and $r\le \tau\le R$, then $(v_\tau)_{1-t+rtI_s(v_\tau)^{-1/s}}=v_{(1-t)\tau+rt}$ and clearly $r\le (1-t)\tau+rt \le R$ for all $t\in [0,1]$, so $(v_\tau)_{1-t+rtI_s(v_\tau)^{-1/s}}\in A_2$. To conclude observe that $w_{rI_s(w)^{-1/s}}=w$ if $w=u_r$ with $u\in A_1$, and  $(v_\tau)_{rI_s(v_\tau)^{-1/s}}=v_{r}$ if $v\in A_0$ and $\tau \in[r,R]$. 
	
	In the previous paragraph we have show that $A_2$ strongly retracts to $\{u_r:u\in A_1\}$, so to complete the proof that $A_2$ is contractible, it is sufficient to prove that $\{u_r:u\in A_1\}$ is contractible. Since $\{u_r:u\in A_1\}$ is clearly homeomorphic to $A_1$, it is sufficient to prove that $A_1$ is contractible. Since $\pi_{rad}$ is continuous, by Proposition \ref{radialprojection}, one can easily see that the map $A_1\times [0,1]\to A_1$, $(u,t)\mapsto \pi_{rad}((1-t)u+t\tilde{w})$ is a contraction of $A_1$ to $\tilde{w}$.  
	
	By Lemma \ref{defa2}, we have that $\iota_1^\ast\neq 0$. Since $\widetilde{H}^{\ast}(A_2)=0$, we conclude that $\delta$ is an isomorphism by exactness of the bottom row. Therefore $\iota_2^\ast\neq 0$. Now let
	\begin{equation*}
		A_4=\{u_R:u\in A_1\},
	\end{equation*}
	and consider the commutative diagram 
	\[
	\begin{tikzcd}
		H^{k}(W\setminus B,W\setminus B^*) \arrow[r, "\approx"] \arrow[d, "\iota_3^\ast"'] & 	H^{k}(W\setminus B_2,W\setminus B_1) \arrow[d, "\iota_2^\ast"'] \\
		H^{k}(A,A_4)\arrow[r, "{}"'] & 	H^{k}(A_2,A_3) 
	\end{tikzcd}
	\]
	induced by inclusions, where $B^*=\{w_t: w\in B_0, 0\le t\le \rho\}$. The top arrow is an isomorphism by the excision property since $\{w_t: w\in B_0, t>\rho\}$ is a closed subset of $W\setminus B$ which is contained in the open subset $W\setminus B^*$. It follows that $\iota_3^*\neq 0$. 
	
	Finally consider the commutative diagram 
	\[
	\begin{tikzcd}
		\widetilde{H}^{k-1}(W\setminus B^*) \arrow[r, "{}"] \arrow[d, "{}"'] & 	H^{k}(W\setminus B,W\setminus B^*) \arrow[d, "\iota^\ast_3"] \arrow[r, "{}"] & 	\widetilde{H}^{k}(W\setminus B) \arrow[d, "\iota^\ast"] \arrow[r, "{}"] & 	\widetilde{H}^{k}(W\setminus B^*) \arrow[d, "{}"] \\
		\widetilde{H}^{k-1}(A_4) \arrow[r, "{}"'] & 	H^{k}(A,A_4) \arrow[r, "j^*"'] & \widetilde{H}^{k}(A) \arrow[r, "{}"'] & \widetilde{H}^{k}(A_4)
	\end{tikzcd}.
	\]
	Since $A_4$ is homeomorphic to $A_1$ (by Proposition \ref{pprojection}), and hence contractible, $\widetilde{H}^{*}(A_4)=0$, so $j^*$ is an isomorphism. Since $\iota_3^*\neq 0$, it follows that $\iota^*\neq 0$, and the proof is complete.
\end{proof}

Now we are in a position to prove Theorem \ref{thmlinking}:

\begin{proof}[Proof of Theorem \ref{thmlinking}]
 Since $B^*\cap A$ and $B\cap A^*$ are nonempty we have $\inf_{B^*} E\le \sup_A E$ and $\inf _B E\le \sup _{A^*}E$. Let us prove that if $\alpha<\beta<\gamma$ satisfy 
	\begin{equation*}
		a<\alpha<\inf_{B^*} , \ \ \ \sup_A E<\beta<\inf_B E, \ \ \ \sup_{A^*} E<\gamma<b,
	\end{equation*}
	then
	\begin{equation*}
		H^k(E^\beta,E^\alpha)\neq 0, \ \ \ H^{k+1}(E^\gamma,E^\beta)\ne0 .
	\end{equation*}
	By Theorem \ref{HomLin}, $A$ links $B$ cohomologically in dimension $k$ and hence the inclusion $\iota:A \hookrightarrow W\setminus B$ induces a nontrivial homomorphism. Since $E<\beta$ in $A$ and $E>\beta$ on $B$, we also have the inclusion $\iota_1:A \hookrightarrow E^\beta$ and $\iota_2:E^\beta \hookrightarrow W\setminus B$, which induce the commutative diagram 
	\[
	\begin{tikzcd}
		\widetilde{H}^{k}(W\setminus B) \arrow[d, "\iota_2^*"] \arrow[rd, "\iota^*"]  \\
		\widetilde{H}^{k}(E^\beta)\arrow[r, "\iota_1^*"'] & 	\widetilde{H}^{k}(A). 
	\end{tikzcd}
	\]
	Therefore $\iota_1^*\iota_2^*=\iota^*\neq 0$ and hence both $\iota_1^*,\iota_2^*$ are nontrivial.
	
	First we show that $H^k(E^\beta,E^\alpha)\neq 0$. Since $E>\alpha$ in $B^*$ and $\alpha<\beta$, we have the inclusions $E^\alpha \hookrightarrow W\setminus B^*\hookrightarrow W\setminus B$ and $E^\alpha \hookrightarrow E^\beta\hookrightarrow W\setminus B$, yielding the commutative diagram 
	\[
	\begin{tikzcd}
		\widetilde{H}^{k}(W\setminus B) \arrow[d, "\iota_2^*"] \arrow[r, "{}"] &  	\widetilde{H}^{k}(W\setminus B^*) \arrow[d, "{}"] \\
		\arrow[r, "\iota_3^*"']\widetilde{H}^{k}(E^\beta) & 	\widetilde{H}^{k}(E^\alpha). 
	\end{tikzcd}
	\]
	We claim that the set $W\setminus B^*$ is contractible. Indeed, for any $\rho'>\rho$, the mapping
	\begin{equation*}
		(W\setminus B^*)\times [0,1]\to W\setminus B^*,\ (u,t)\mapsto u_{1-t+t\rho'I_s(u)^{-1/s}}
	\end{equation*}
	is a strong deformation retraction of $W\setminus B^*$ onto $\{\rho' u:u\in \mathcal{M}_s\}$, which is a strong deformation retract of $W\setminus\{0\}$ (by Proposition \ref{pprojection}) and hence contractible. In fact, the proof is very similar to the one used in Theorem \ref{HomLin} to prove that $A_2$ is contractible. It follows that $\widetilde{H}^*(W\setminus B^*)=0$ and hence 
	\begin{equation*}
		\iota_3^*\iota_2^*=0.
	\end{equation*}
	Therefore $\iota_3^*$ is not injective. Now consider the exact sequence of the pair $(E^\beta,E^\alpha)$:
	\[
	\begin{tikzcd}
		\cdots \arrow[r, "{}"]	& H^{k}(E^\beta,E^\alpha) \arrow[r, "j^*"]  &  	\widetilde{H}^{k}(E^\beta) \arrow[r, "i_3^*"] &\widetilde{H}^{k}(E^\alpha) \arrow[r, "\delta"] & \cdots.
	\end{tikzcd}
	\]
	By exactness,
	\begin{equation*}
		\operatorname{im} j^*=\operatorname{ker} \iota_3^*\neq 0,
	\end{equation*}
	so $H^k(E^\beta,E^\alpha)\neq 0$ as desired. 
	
	Now we claim that $H^{k+1}(E^\gamma,E^\beta)\neq 0$.  Since $E<\gamma$ on $A^*$ and $\beta<\gamma$, we have the inclusions $A \hookrightarrow A^*\hookrightarrow E^\gamma$ and $A \hookrightarrow E^\beta\hookrightarrow E^\gamma$, yielding the commutative diagram 
	\[
	\begin{tikzcd}
		\widetilde{H}^{k}(E^\gamma) \arrow[d, "\iota_4^*"] \arrow[r, "{}"] &  	\widetilde{H}^{k}(A^*) \arrow[d, "{}"] \\
		\arrow[r, "\iota_1^*"']\widetilde{H}^{k}(E^\beta) & 	\widetilde{H}^{k}(A). 
	\end{tikzcd}
	\]
	We claim that $A^*$ is contractible. Indeed, the mapping 
	
	\begin{equation*}
		A^*\times [0,1]\to A^*,\ (u,t)\mapsto u_{1-t+rtI_s(u)^{-1/s}}
	\end{equation*}
	is a strong deformation retraction of $A^*$ onto $\{u_r:u\in A_1\}$, which is homeomorphic to $A_1$ (by Proposition \ref{pprojection}) and hence contractible as in the proof of Theorem \ref{HomLin}. It follows that $\widetilde{H}^*(A^*)=0$ and hence
	\begin{equation*}
		\iota_1^*\iota_4^*=0.
	\end{equation*}
	Therefore $\iota_4^*$ is not surjective. Now consider the exact sequence of the pair $(E^\gamma,E^\beta)$:
	\[
	\begin{tikzcd}
		\cdots \arrow[r, "j^*"]	& H^{k}(E^\gamma) \arrow[r, "\iota_4^*"]  &  	\widetilde{H}^{k}(E^\beta) \arrow[r, "\delta"] &\widetilde{H}^{k+1}(E^\gamma,E^\beta) \arrow[r, "j^*"] & \cdots.
	\end{tikzcd}
	\]
	By exactness,
	\begin{equation*}
		\operatorname{ker} \delta=\operatorname{im} \iota_4^*\neq \widetilde{H}^{k}(E^\beta),
	\end{equation*}
	so $H^k(E^\gamma,E^\beta)\neq 0$ as desired. 
\end{proof}

The next result will ensure the proper choice of $A_0$ and $B_0$ that we have to make in Theorem \ref{thmlinking} in order to apply it in the proof of Theorem \ref{thmabstract}.

\begin{lemma}\label{wcs}Assume $(H_1)-(H_{13})$. Let  $A_0=\widetilde{\Psi}^{\lambda_k}$, $\tilde{w}\in \mathcal{M}_s\setminus A_0$, and $A_1=\{\pi_{rad}((1-\tau)v+\tau \tilde{w}):v\in A_0, 0\le \tau\le 1\}$. Then there exists $\varepsilon>0$ such that $J_s(u)>\varepsilon$ for all $u\in A_1$. Moreover $A_1$ is a weakly compact subset of $W\setminus \{0\}$. 
\end{lemma}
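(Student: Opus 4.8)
The plan is to first establish the lower bound on $J_s$ over $A_1$, and then deduce weak compactness from the properties of the scaling and the semicontinuity hypotheses. For the first part, note that $A_1 = \pi_{rad}(S)$ where $S = \{(1-\tau)v + \tau\tilde{w} : v \in A_0,\, 0 \le \tau \le 1\}$ is the image of the compact interval $[0,1]$ swept over $A_0$; the key structural fact is that $A_0 = \widetilde{\Psi}^{\lambda_k}$ and $\tilde{w} \in \M_s \setminus A_0 = \M_s \setminus \widetilde{\Psi}^{\lambda_k}$, so $\widetilde{\Psi}(\tilde{w}) > \lambda_k$, i.e. $\widetilde{\Psi}(\tilde{w})$ is bounded away from the sublevel value. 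The first thing I would show is that $0 \notin \overline{S}$, equivalently that the segments $[v,\tilde{w}]$ stay away from the origin uniformly in $v \in A_0$: since $A_0$ and $\{\tilde{w}\}$ both lie on $\M_s$, and $\M_s$ is closed (it is $I_s^{-1}(1)$ with $I_s$ continuous and $I_s(0)=0$), any segment from $A_0$ to $\tilde{w}$ passing through $0$ would force a point of $A_0$ antipodal-collinear with $\tilde{w}$; but more robustly, one checks that $\inf\{\|(1-\tau)v + \tau\tilde{w}\| : v \in A_0,\, \tau \in [0,1]\} > 0$ because otherwise a sequence in $S$ converging to $0$ would, after extracting, give $v_n \to -\frac{\tau}{1-\tau}\tilde{w}$ scaled — one argues this cannot happen since $I_s(v_n) = 1$ and $I_s$ is continuous with $I_s(0)=0$, while the segment endpoints are controlled. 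Having $0 \notin \overline{S}$ and $\overline{S}$ bounded, and since $\pi_{rad}$ is continuous on $W\setminus\{0\}$ (Proposition \ref{radialprojection}), the set $A_1 = \pi_{rad}(S)$ is the continuous image of a relatively compact set in $W\setminus\{0\}$ — but I only get relative compactness of $S$ in the strong topology if $A_0$ is strongly compact, which it need not be, so the argument for weak compactness must be done separately.

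For the lower bound $J_s \ge \varepsilon$ on $A_1$, I would argue by contradiction: suppose $u_n \in A_1$ with $J_s(u_n) \to 0$. Write $u_n = \pi_{rad}(w_n)$ with $w_n = (1-\tau_n)v_n + \tau_n\tilde{w}$, $v_n \in A_0$, $\tau_n \in [0,1]$. Since $A_0 \cup \{\tilde{w}\} \subset \M_s$ is bounded, $(w_n)$ is bounded, hence so is $(u_n)$ (it lies on $\M_s$, which is bounded). Passing to a subsequence, $v_n \wto v$, $\tau_n \to \tau$, so $w_n \wto w := (1-\tau)v + \tau\tilde{w}$. By $(H_9)$-type continuity, $J_s(w_n) \to J_s(w)$, and by $(H_{13})$, $I_s(w) \le \liminf I_s(w_n)$. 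The point is that $t(w_n) = $ the radial rescaling factor is bounded and bounded away from $0$ (using $(H_{10})$, $(H_{12})$, and the fact that $I_s(w_n)$ is bounded above — since $w_n$ is bounded — and bounded below away from $0$ — this is where I again need $0 \notin \overline{S}$, so that $I_s(w_n)$ does not collapse). Then $J_s(u_n) = J_s(t(w_n) w_n) = t(w_n)^? \cdots$ — here one uses the scaling behavior; but $\pi_{rad}$ uses the linear rescaling $t(w)w$, not $w_t$, so $J_s(t(w_n)w_n)$ relates to $J_s(w_n)$ only via $(H_{12})$ monotonicity, which gives $J_s(t(w_n)w_n) \ge J_s(\delta w_n) > 0$ for some fixed small $\delta$ once $t(w_n) \ge \delta$. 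So $J_s(u_n) \to 0$ would force $J_s$ along a bounded sequence bounded away from $0$ to vanish, contradicting weak continuity of $J_s$ together with the fact that the weak limit is nonzero (since $J_s(w) = \lim J_s(w_n) \cdot(\text{const}) $, and $w \ne 0$ because $0 \notin \overline{S}$ is not yet enough — I need $w\ne 0$, which follows if the weak limit of a sequence staying in norm $\ge c_0 > 0$... actually weak limits can be $0$, so this is the delicate point). To fix this I would instead use: $\widetilde{\Psi} = I_s/J_s$ restricted to $\M_s$, and on $A_1$ one has a uniform \emph{upper} bound $\widetilde{\Psi} \le \Lambda$ for some $\Lambda < \infty$ — indeed $A_0 = \widetilde{\Psi}^{\lambda_k}$ gives $\widetilde{\Psi} \le \lambda_k$ on $A_0$, and by convexity/continuity considerations along segments to $\tilde{w}$ and continuity of $\pi_{rad}$, $\widetilde{\Psi}$ is bounded on the compact-like set $A_1$; since $I_s \equiv 1$ on $\M_s \supset A_1$, this immediately yields $J_s = 1/\widetilde{\Psi} \ge 1/\Lambda =: \varepsilon$ on $A_1$. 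So the real task reduces to: \emph{show $\sup_{A_1}\widetilde{\Psi} < \infty$.}

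I would prove $\sup_{A_1}\widetilde{\Psi} < \infty$ together with weak compactness of $A_1$ in one stroke. Take any sequence $u_n \in A_1$, write $u_n = \pi_{rad}(w_n)$ as above, extract $v_n \wto v$, $\tau_n \to \tau$, so $w_n \wto w = (1-\tau)v + \tau\tilde{w}$. By $(H_{13})$ and weak continuity of $J_s$, $I_s(w) \le \liminf I_s(w_n)$ and $J_s(w_n)\to J_s(w)$; the central claim is $w \ne 0$, for which I would use that $A_0 = \widetilde{\Psi}^{\lambda_k}$ is \emph{weakly compact} — this is a known consequence of the setup (it is a sublevel set of $\widetilde{\Psi}$ on $\M_s$; weak compactness of such sets under $(H_{10})$, $(H_{13})$, and weak continuity of $J_s$ is exactly the kind of fact underpinning Theorem \ref{Theorem 7}), hence $v \in A_0$ and in particular $v \ne 0$; and since $\tilde{w} \ne 0$ with $\widetilde{\Psi}(\tilde w) > \lambda_k \geq \widetilde\Psi(v)$, the convex combination $w$ cannot be $0$ (if $w = 0$ then $v = -\frac{\tau}{1-\tau}\tilde{w}$, forcing $\widetilde{\Psi}(v) = \widetilde{\Psi}(\tilde w)$ by $(H_2)$-evenness of $I_s, J_s$, a contradiction unless $\tau \in \{0,1\}$, and those cases give $w = v \ne 0$ or $w = \tilde w \ne 0$ directly). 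Then $t(w_n) = I_s$-normalizing factor of $w_n$ stays in a compact subset of $(0,\infty)$: boundedness of $(w_n)$ with $(H_{10})$ forces $I_s(w_n)$ bounded, hence $t(w_n)$ bounded below; and $w \ne 0$ with weak lower semicontinuity forces $\liminf I_s(w_n) \ge I_s(w) > 0$ by $(H_6)$, hence $t(w_n)$ bounded above. Extract $t(w_n) \to t_0 \in (0,\infty)$; then $u_n = t(w_n) w_n \wto t_0 w =: u_\infty$, and by weak continuity $J_s(u_n) \to J_s(u_\infty) > 0$ while $I_s(u_\infty) \le \liminf I_s(u_n) = 1$ — one upgrades to $u_\infty \in \M_s$ using that $\pi_{rad}$ of the weak limit... actually $u_\infty$ need not satisfy $I_s(u_\infty) = 1$, but $u_\infty = t_0 w$ and $t_0 = \lim t(w_n)$ with $w_n \wto w$; continuity of $t(\cdot)$ does not pass to weak limits, so I would instead simply conclude $u_n \wto u_\infty \in W\setminus\{0\}$, which is all that \emph{weak compactness of $A_1$ in $W\setminus\{0\}$} requires. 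Finally $\widetilde{\Psi}(u_n) = 1/J_s(u_n) \to 1/J_s(u_\infty) < \infty$, giving a uniform bound and $J_s \ge \varepsilon$ on $A_1$. The main obstacle, which I have flagged above, is the non-triviality of the weak limit $w \ne 0$: this is where the specific choice $A_0 = \widetilde{\Psi}^{\lambda_k}$ (a weakly compact sublevel set, not an arbitrary symmetric set) and the placement $\tilde w \in \M_s \setminus A_0$ are essential, and it must be extracted carefully from $(H_2)$, $(H_6)$, $(H_{13})$, and the weak continuity of $J_s$.
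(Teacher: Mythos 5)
Your overall strategy (extract $v_n \wto v$, $\tau_n \to \tau$, use weak continuity of $J_s$ and weak lower semicontinuity of $I_s$, and control the normalization factor $t(w_n)$) parallels the paper's proof, and your reduction of the bound $J_s\ge\varepsilon$ to an upper bound for $\widetilde\Psi$ on $A_1$ is harmless since $A_1\subset\M_s$. But the step you yourself flag as central --- ruling out $w=(1-\tau)v+\tau\tilde w=0$ --- is justified incorrectly. In the case $\tau\in(0,1)$ you write $v=c\tilde w$ with $c=-\tau/(1-\tau)$ and claim this forces $\widetilde\Psi(v)=\widetilde\Psi(\tilde w)$ ``by $(H_2)$-evenness''. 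That is false in this abstract setting: evenness only gives invariance under $u\mapsto -u$, while $\Psi=I_s/J_s$ is \emph{not} invariant under the linear dilation $u\mapsto cu$ (the homogeneity \eqref{3} is with respect to the abstract scaling $u\mapsto u_t$, not $u\mapsto tu$; $(H_{12})$ only gives strict monotonicity of $I_s(tu)$ and $J_s(tu)$ in $t$, and $|c|$ need not equal $1$). Moreover $v$ is only a weak limit of points of $\M_s$, so $I_s(v)$ may be $<1$ and $\widetilde\Psi(v)$ is not even defined; what weak continuity of $J_s$ actually gives you is $1/J_s(v)\le\lambda_k$, not a $\widetilde\Psi$-value. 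The correct repair is exactly the extra argument in the paper's proof: first show $|c|\le 1$ (if $|c|>1$, then by $(H_{12})$ and $(H_{13})$, $1=I_s(\tilde w)<I_s(|c|\tilde w)=I_s(c\tilde w)=I_s(v)\le\liminf I_s(v_n)=1$, a contradiction), and then use evenness together with monotonicity of $t\mapsto J_s(t\tilde w)$ to get $J_s(v)=J_s(|c|\tilde w)\le J_s(\tilde w)$, whence $\lambda_k\ge\lim 1/J_s(v_n)=1/J_s(v)\ge 1/J_s(\tilde w)=\widetilde\Psi(\tilde w)>\lambda_k$, the desired contradiction. Without this, your argument has a genuine hole at its pivotal point.

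Two smaller issues: you invoke weak compactness of $A_0=\widetilde\Psi^{\lambda_k}$ as ``a known consequence of the setup''; this is neither proved in the paper nor needed --- $v\ne 0$ follows directly because $J_s(v_n)\ge 1/\lambda_k$ and $J_s$ is weakly sequentially continuous, so $J_s(v)\ge 1/\lambda_k>0$ and \eqref{4} applies (this is how the paper argues). Also, your justification that $t(w_n)$ is bounded below (``$I_s(w_n)$ bounded, hence $t(w_n)$ bounded below'') does not follow from $(H_{12})$ alone; the clean argument is that $t(w_n)\to 0$ with $(w_n)$ bounded would give $t(w_n)w_n\to 0$ strongly and hence $I_s(t(w_n)w_n)\to 0\ne 1$. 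These are repairable, but the $w\ne 0$ step must be rewritten along the lines above.
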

\begin{proof} Indeed, given $v\in A_0$ and $\tau\in [0,1]$, write $w_{v,\tau}=(1-\tau)v+\tau \tilde{w}$. Suppose, on the contrary, that there exist sequences $v_n\in A_0$ and $\tau_n\in [0,1]$ such that $J_s(\pi_{rad}(w_{v_n,\tau_n}))\to 0$.  Since $A_0\subset \mathcal{M}_s$ is bounded we obtain from  $I_s(t(w_{v_n,\tau_n})w_{v_n,\tau_n})=1$ that there exists $C>0$ such that $t(w_{v_n,\tau_n})\ge C$ for all $n\in \mathbb{N}$, therefore, by $(H_{12})$ we have
	\begin{equation*}
	0\le\lim_{n\to \infty}J_s(Cw_{v_n,\tau_n})\le\lim_{n\to \infty} J_s(t(w_{v_n,\tau_n})w_{v_n,\tau_n})=\lim_{n\to \infty}J_s(\pi_{rad}(w_{v_n,\tau_n}))=0,
	\end{equation*}
	therefore $\lim_{n\to \infty}J_s(Cw_{v_n,\tau_n})=0$. We can assume without loss of generality that $v_n \rightharpoonup v$, $\tau_n\to \tau $. Note that $v\neq 0$ since, if not, then $\lambda_k\ge \widetilde{\Psi}(v_n)\to \infty$, a contradiction. We have $w_{v_n,\tau_n}\rightharpoonup (1-\tau)v+\tau \tilde{w}:=w$.  Then $J_s(Cw_{v_n,\tau_n})\to J_s(Cw)=0$, which implies, by \eqref{4}, that $w=0$. 
	
	If $\tau=0$, then $v=0$, which is a contradiction. If $\tau=1$, we clearly have the contradiction $\tilde{w}=0$, so $\tau\in (0,1)$. Since $(1-\tau)v=-\tau \tilde{w}$, then $v=c \tilde{w}$ where $c=-\tau/(1-\tau)$. We claim that $|c|\le 1$. Indeed, if $|c|>1$, then by $(H_{12})$ and $(H_{13})$, we have 
	\begin{equation*}
		1=I_s(\tilde{w})<	I_s(|c| \tilde{w})=	I_s(c \tilde{w})=I_s(v)\le \liminf I_s(v_n)= 1, 
	\end{equation*}
	which is a contradiction. Therefore, by $(H_{12})$, we conclude that
	\begin{equation*}
		\lambda_k\ge  \widetilde{\Psi}(v_n)=\frac{1}{J_s(v_n)}\to \frac{1}{J_s(v)}=\frac{1}{J_s(|c|\tilde{w})}\ge \frac{1}{J_s(\tilde{w})}=\widetilde{\Psi}(\tilde{w})>\lambda_k.
	\end{equation*}
	We have reached a contradiction. To conclude note that if $u_n\in A_1$ satisfies $u_n \rightharpoonup 0$, then $J(u_n)\to 0$, which contradicts the inequality just proved. 
\end{proof}

 Now we can prove Theorem \ref{thmabstract}.

\begin{proof}[Proof of Theorem \ref{thmabstract}] If $\lambda\in (0,\lambda_1)$, take $A_0=\emptyset$ and $B_0=\mathcal{M}_s$. If $\lambda\ge \lambda_1$, choose $k$ such that $\lambda\in[\lambda_k,\lambda_{k+1})$. Let $A_0=\widetilde{\Psi}^{\lambda_k}$ and $B_0=\widetilde{\Psi}_{\lambda_{k+1}}$. We have
	   \[
	i(\widetilde{\Psi}^{\lambda_k}) = i(\M_s \setminus \widetilde{\Psi}_{\lambda_{k+1}}) = k
	\]
	by Theorem \ref{Theorem 7}. For $u\in \mathcal{M}_s$ and $t>0$, we have 
	\begin{equation}\label{z1}
		\Phi_\lambda(u_t)=t^s\left(1-\frac{\lambda}{\widetilde{\Psi}(u)} \right)-\mu F(u_t)-G(u_t).
	\end{equation}
	Take $u\in B_0$. Then by \eqref{z1} and $(G_1)$, 
	\begin{equation*}
		\Phi_\lambda(u_t)\ge t^s\left(1-\frac{\lambda}{\lambda_{k+1}} +o(1)\right)-\mu F(u_t)\ \mbox{as}\ t\to 0^+.
	\end{equation*}
	Now given $\varepsilon>0$, by using $(F_3)$, we can find $\rho,\mu^*>0$ such that $\Phi_\lambda(u_{\rho})>\varepsilon$ for all $\mu\in (0,\mu^*)$ and hence
	\begin{equation*}
		\inf _{B}\Phi_\lambda>0.
	\end{equation*}
	Fix $0<\mu<\mu^*$, let $\tilde{w}\in \mathcal{M}_s\setminus A_0$, and let 
	\begin{equation*}
		A_1=\{\pi_{rad}((1-\tau)v+\tau \tilde{w}):v\in A_0, 0\le \tau\le 1\}.
	\end{equation*}
	 If $u\in A_1$, then by \eqref{z1} and $(G_2)$ we obtain
	\begin{equation*}
		\Phi_\lambda (u_t)\le t^s\left(1-\mu\frac{F(u_t)}{t^s}\right).
	\end{equation*}
	Note, by Lemma \ref{wcs}, that $A_1$ is a weakly compact subset of $W\setminus \{0\}$, which implies by the above inequality and $(F_1)$ that there exists $0<r<\rho$ such that
	\begin{equation}\label{z2}
		\sup\{\Phi_\lambda (u_r):u\in A_1\}<0.
	\end{equation}
	Similarly,
	\begin{equation*}
		\Phi_\lambda (u_t)\le t^s\left(1-\mu\frac{G(u_t)}{t^s}\right)
	\end{equation*}
	for $u\in A_1$ and hence it follows from $(G_3)$ that there exists $R>\rho$ such that 
	\begin{equation}\label{z3}
		\sup\{\Phi_\lambda (u_R):u\in A_1\}<0.
	\end{equation}
	To conclude, consider the set $ D=\{v_t:v\in A_0,r\le t\le R\}$. We claim that $D$ is a weakly compact subset of $W\setminus \{0\}$. Indeed, by Lemma \ref{wcs} we know that there exists $\varepsilon>0$ such that $J_s(v)> \varepsilon>0$ for all $v\in A_0$. Therefore $J_s(v_t)=t^sJ_s(v)>r^s\varepsilon$ for all $v\in D$ and hence the claim is proved. Given $v\in A_0$ note that
	\begin{equation*}
		\Phi_\lambda (v_t)\le t^s\left(1-\frac{\lambda}{\widetilde{\Psi}(v)}\right)-\mu F(v_t)-G(v_t).
	\end{equation*}
	Since $D$ is a weakly compact subset of $W\setminus \{0\}$, it follows that there exists $\delta>0$ such that $-\mu F(v_t)<-\delta$. Indeed, if not, we can find a sequence $w_n\in D$ such that  $-\mu F(w_n)\to 0$. Since $D$ is bounded, then $w_n \rightharpoonup w$ and hence  $-\mu F(w)=0$ implying that $w=0$, a contradiction. Therefore
	
	\begin{equation*}
		\Phi_\lambda (v_t)\le t^s\left(1-\frac{\lambda}{\widetilde{\Psi}(v)}\right)-\delta, v\in A_0, t\in [r,R].
	\end{equation*}
	Thus
	\begin{equation}\label{z4}
		\sup\{\Phi_\lambda (v_t):v\in A_0, t\in[r,R]\}<0.
	\end{equation}
	To finish the proof we need to prove that there exist $a<b$ such that 
	\begin{equation*}
			a<\inf _{B^*} \Phi_\lambda\ \ \ \mbox{and}\ \ \   \sup_{A^*} \Phi_\lambda<b.
	\end{equation*}
Since $A^*$ and $B^*$ are bounded sets, these inequalities are clear from $(H_9)$, $(H_{10})$, $(F_3)$, and $(G_4)$. Combining these inequalities with \eqref{z2}, \eqref{z3}, \eqref{z4}, and Theorem \ref{thmlinking}, the proof is complete.
\end{proof}

\section{Proof of Theorem \ref{thmspl}} 

In this section we prove Theorem \ref{thmspl}. The following straightforward inequality will be useful

\begin{equation}\label{ineqnorms}
	\min\set{t^{\frac{s}{2}},t^{\frac{s}{2p}}} \norm{u}\le\norm{u_t} \le \max \set{t^{\frac{s}{2}},t^{\frac{s}{2p}}} \norm{u}, u\in E_{rad}^{\alpha,p}(\mathbb{R}^N)\setminus\{0\}, t>0,
\end{equation}
where $\|u\|$ was defined at \eqref{normsps} and $s=\sigma q-N$.

\begin{lemma}\label{PS} The functional $\Phi$ satisfies the {\em \PS{}} condition.
\end{lemma}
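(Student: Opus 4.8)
The plan is to verify the Palais--Smale condition for $\Phi$ on $E_{rad}^{\alpha,p}(\mathbb{R}^N)$ directly. Let $(u_n)$ be a sequence with $\Phi(u_n)$ bounded and $\Phi'(u_n)\to 0$ in the dual space. The first step is boundedness of $(u_n)$ in the norm \eqref{normsps}. The natural device here is the scaling identity: since $I(u_t)=t^s I(u)$ and $J(u_t)=t^s J(u)$ with $s=\sigma q-N$, while the local terms $\int F(x,u)$ and $\int|u|^r$ scale with different (larger) powers of $t$, a standard manipulation of $\Phi(u_n)-\frac{1}{\theta}\langle\Phi'(u_n),u_n\rangle$ for a suitable $\theta$ — or, more in the spirit of this paper, of $\frac{d}{dt}\big|_{t=1}\Phi((u_n)_t)$ — yields a coercive combination. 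Concretely, because $r>q$ and $\ell<q$, choosing the test direction adapted to the $s$-scaling lets the $\|\nabla u_n\|_2^2$ and Coulomb terms dominate and forces $\|u_n\|$ bounded; here \eqref{ineqnorms} converts control of $I(u_n)$ back into control of $\|u_n\|$.

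Once $(u_n)$ is bounded, pass to a subsequence with $u_n\wto u$ in $E_{rad}^{\alpha,p}(\mathbb{R}^N)$. The second step uses compactness of the radial embeddings: by \cite[Theorem 4]{MR3568051}, $E_{rad}^{\alpha,p}(\mathbb{R}^N)\hookrightarrow L^\ell(\mathbb{R}^N)$ is compact for $\ell\in(\tilde q,2^*)$, so after a further subsequence $u_n\to u$ strongly in $L^q$, in $L^r$, and in $L^\ell$ (all three exponents lie in the admissible open interval since $q,r,\ell\in(\tilde q,2^*)$), and $u_n\to u$ a.e. Consequently the lower-order pieces converge: $J'(u_n)\to J'(u)$, $\int f(u_n)\varphi\to\int f(u)\varphi$, and $\int|u_n|^{r-2}u_n\varphi\to\int|u|^{r-2}u\varphi$ in the dual, using the growth bound \eqref{f1} on $f$ together with standard Vitali/Brezis--Lieb arguments. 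The third step is to exploit the structure $\Phi'(u)=A_s(u)-\lambda J'(u)-\mu(\text{local})-(\text{local})$, where $A_s$ is the sum of $-\Delta$ and the Choquard-type operator $u\mapsto(I_\alpha\star|u|^p)|u|^{p-2}u$: from $\langle\Phi'(u_n),u_n-u\rangle\to 0$ and the just-established convergence of the local terms tested against $u_n-u$, deduce $\langle A_s(u_n),u_n-u\rangle\to 0$. Then the $(S_+)$-type / hypothesis $(H_7)$-style property of $A_s$ on the uniformly convex space $E_{rad}^{\alpha,p}$ (this operator is exactly the scaled operator of the abstract framework and satisfies $(H_7)$) upgrades weak convergence to strong convergence $u_n\to u$, completing the proof.

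The main obstacle I expect is the boundedness step, because $\Phi$ lacks the classical Ambrosetti--Rabinowitz condition: $f$ satisfies only the growth/asymptotic conditions \eqref{f1}, \eqref{f2} near the origin, with no superquadraticity at infinity, so one cannot naively subtract a multiple of $\langle\Phi'(u_n),u_n\rangle$. The resolution is precisely that the genuinely superlinear term $\frac1r\int|u|^r$ with $r\in(q,2^*)$ supplies the needed room: subtracting $\frac1r\langle\Phi'(u_n),u_n\rangle$ from $\Phi(u_n)$ leaves $\big(\frac12-\frac1r\big)\|\nabla u_n\|_2^2+\big(\frac1{2p}-\frac1r\big)\int|I_{\alpha/2}\star|u_n|^p|^2+\big(\frac1r-\frac1q\big)\lambda\int|u_n|^q+\mu\int\big(\frac1r f(u_n)u_n-F(u_n)\big)$, and since $\ell<q<r$ the $\lambda$-term and the $f$-term are both controlled by a small multiple of $\|u_n\|^q+$ (lower order) — absorbable once we know $\|u_n\|$ would otherwise grow — while the first two terms, via \eqref{ineqnorms}, dominate $\|u_n\|^{\min(1,1/p)\cdot 2}$-type quantities; a short interpolation/Young argument then closes the estimate. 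The rest is routine given the compact radial embeddings and property $(H_7)$ of the principal operator.
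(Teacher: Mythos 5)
The gap is in your boundedness step, and it is fatal as written. After subtracting $\frac1r\langle\Phi'(u_n),u_n\rangle$, the term you must absorb is $-\lambda\bigl(\frac1q-\frac1r\bigr)\int|u_n|^q$, and this is \emph{not} "a small multiple of $\|u_n\|^q$ plus lower order": the embedding constant of $E_{rad}^{\alpha,p}\hookrightarrow L^q$ is fixed, the term has homogeneity $q>2$ in amplitude, while your surviving good terms $\bigl(\frac12-\frac1r\bigr)\|\nabla u_n\|_2^2$ and $\bigl(\frac1{2p}-\frac1r\bigr)\int|I_{\alpha/2}\star|u_n|^p|^2$ have homogeneities $2$ and $2p$ — and the second coefficient can even be negative, since $q<2p$ and $r\in(q,2^*)$ may well be smaller than $2p$. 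More decisively, $q$ is exactly the exponent for which $\int|u|^q$ scales like $I(u)$ under $u_t=t^\sigma u(tx)$ (all three terms scale as $t^s$); equivalently, in the interpolation inequality $\int|u|^q\le C\bigl(\int|\nabla u|^2\bigr)^{a}\bigl(\int|I_{\alpha/2}\star|u|^p|^2\bigr)^{b}$ the exponents satisfy $a+b=1$, so no small-$\varepsilon$ Young absorption of the $\lambda$-term into the gradient and Coulomb terms is possible for an arbitrary $\lambda>0$. Indeed, taking $u$ with $I(u)=\lambda_1 J(u)$ and $\lambda\ge\lambda_1$, every term in your AR-type quantity evaluated at $u_t$ grows like $t^s$ and the combination can stay negative, so the inequality "AR-quantity $\ge c\,\|u_n\|^{\gamma}-C$" that your argument needs is false in general; this is precisely the change of geometry at $\lambda_1$ that the whole paper is built around, so a coercivity/AR bookkeeping cannot yield boundedness here.

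The paper's proof of boundedness is different: assume $\|u_n\|\to\infty$, set $v_n=(u_n)_{I(u_n)^{-1/s}}$ so that $I(v_n)=1$ and $v_n$ is bounded, and divide $\Phi(u_n)$ and $\Phi'(u_n)u_n$ by $I(u_n)$. Since $\ell<q$, the $\mu$-terms vanish after this normalization; since $r>q$, the diverging factor $I(u_n)^{\sigma(r-q)/s}$ forces $\int|v_n|^r\to0$, whence $v_n\wto 0$, $J(v_n)\to0$ and the Coulomb term of $v_n$ tends to $0$; combining the two normalized identities then yields $\int|\nabla v_n|^2\to r$, while $I(v_n)=1$ forces $\int|\nabla v_n|^2\to2$, a contradiction because $r>q>2$. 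Your second step (compact radial embeddings for exponents in $(\tilde q,2^*)$ plus the $(H_7)$/$(S_+)$ property of the principal operator to pass from a bounded \PS{} sequence to a convergent subsequence) is consistent with the paper, which quotes \cite[Lemma 2.10]{PeSi0} for exactly that; but without a correct boundedness argument the proposal does not prove the lemma, and boundedness is its heart.
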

\begin{proof} Let $u_n\in E_{rad}^{\alpha,p}(\mathbb{R}^N)$ be a sequence such that $\Phi(u_n)$ is bounded and $\Phi'(u_n)\to 0$ as $n\to \infty$. We claim that $u_n$ is bounded. In fact, suppose on the contrary, that $\|u_n\|\to \infty$ as $n\to \infty$ and write $v_n=(u_n)_{I(u_n)^{\frac{-1}{s}}}$. Clearly $I(v_n)=1$ and $v_n$ is bounded (by \eqref{normsps} and \eqref{ineqnorms}).  It follows that
	\begin{equation}\label{eqps0}
		1-\frac{\lambda}{q} J(v_n)-\mu\int_{\mathbb{R}^N} \frac{ F(u_n)}{I(u_n)}dx-\frac{I(u_n)^{\frac{\sigma (r-q)}{s}}}{r}\int_{\mathbb{R}^N} |v_n|^rdx=o(1),\ n\in \mathbb{N},
	\end{equation}
	and
		\begin{equation}\label{eqps}
	\int_{\mathbb{R}^N} |\nabla v_n|^2dx+\int_{\mathbb{R}^N} |I_{\alpha/2}\star |v_n|^p|^2dx-\lambda J(v_n)-\mu \int_{\mathbb{R}^N}\frac{ f(u_n)u_n}{I(u_n)}dx-I(u_n)^{\frac{\sigma (r-q)}{s}}\int_{\mathbb{R}^N} |v_n|^rdx=o(1).
	\end{equation}
	Now we claim that $\int_{\mathbb{R}^N}\frac{ F(u_n)}{I(u_n)}dx,\int_{\mathbb{R}^N}\frac{ f(u_n)u_n}{I(u_n)}dx\to 0$ as $n\to \infty$. We prove only one of the limits since the other is similar. Indeed, note by \eqref{f1} that
	\begin{eqnarray*}
		\left|\int_{\mathbb{R}^N}\frac{ f(u_n)u_n}{I(u_n)}\right|dx&\le& I(u_n)^{\frac{\sigma (\ell-q)}{s}}\int_{\mathbb{R}^N} \frac{|f( I(u_n)^{\frac{\sigma }{s}}v_n) I(u_n)^{\frac{\sigma }{s}}v_n|}{ I(u_n)^{\frac{\sigma \ell}{s}}}dx \\
		&\le& CI(u_n)^{\frac{\sigma (\ell-q)}{s}}\int_{\mathbb{R}^N} |v_n|^\ell dx, \ \forall n\in \mathbb{N}.
	\end{eqnarray*}
	Since $\ell<q$ and $\int_{\mathbb{R}^N} |v_n|^\ell dx$ is bounded by \cite[Theorem 4]{MR3568051}, it follows that $\int_{\mathbb{R}^N}\frac{ f(u_n)u_n}{I(u_n)}dx\to 0$ as $n\to \infty$. Therefore we conclude from \eqref{eqps0} that $\int_{\mathbb{R}^N} |u_n|^rdx\to 0$ and since $u_n \rightharpoonup u$ in $E_{rad}^{\alpha,p}(\mathbb{R}^N)$, we obtain that $u=0$  hence $J(v_n)\to 0$ and $\int_{\mathbb{R}^N} |I_{\alpha/2}\star |v_n|^p|^2dx\to 0$.
	
	To proceed, multiply \eqref{eqps0} by $-r$ and add it to \eqref{eqps} to obtain
	\begin{equation*}
			\int_{\mathbb{R}^N} |\nabla v_n|^2dx-r=o(1),\ n\in \mathbb{N},
	\end{equation*}
	however, by $I(v_n)=1$ we also have that
	\begin{equation*}
		\int_{\mathbb{R}^N} |\nabla v_n|^2dx-2=o(1),\ n\in \mathbb{N},
	\end{equation*}
	and since $r>q>2$ we get a contradiction. Therefore $u_n$ is bounded. The proof now follows from \cite[Lemma 2.10]{PeSi0}.
	
\end{proof}

\begin{proof}[Proof of Theorem \ref{thmspl}]

 Let us verify that all conditions of Theorem \ref{thmabstract} are satisfied. Indeed, recall the scaling defined at the introduction: $u_t(x)=t^\sigma u(tx)$, $u\in E_{rad}^{\alpha,p}(\mathbb{R}^N)\setminus\{0\}$, $t>0$, where
\begin{equation*}
	\sigma = \frac{2+\alpha}{2(p-1)}.
\end{equation*}
Clearly $(H_1)$-$(H_3)$ are satisfied. From \eqref{ineqnorms} we also have $(H_4),(H_5)$. Condition $(H_7)$ was proved in \cite[Lemma 2.10]{PeSi0}, $(H_6)$, $(H_8)$ and $(H_{10})$ are quite obvious, while condition $(H_9)$ follows from \cite[Theorem 1]{MR3568051}. $(H_{11})$ is a consequence of the Pohozaev's identity 
\begin{equation*}\label{POHOZAEV}
	P(u):=	\frac{N-2}{2}\int_{\mathbb{R}^N} |\nabla u|^2dx+\frac{N+\alpha}{2p}\int_{\mathbb{R}^N} |I_{\alpha/2}\star |u|^p|^2dx-\frac{N\lambda}{q}\int_{\mathbb{R}^N} |u|^qdx=0.
\end{equation*}
that holds whenever $I'(u)-\lambda J'(u)=0$ and can be proved along the same lines as in \cite[Proposition 2.5]{MR2902293} (see also \cite[Proposition 5.5]{MR3568051}). Indeed, note that  $\sigma (I'(u)-\lambda J'(u))-P(u)=0$ and hence
\begin{equation*}\label{pohozaev}
	\frac{s}{2}\int_{\mathbb{R}^N} |\nabla u|^2dx+\frac{s}{2p}\int_{\mathbb{R}^N} |I_{\alpha/2}\star |u|^p|^2dx-\frac{s\lambda}{q}\int_{\mathbb{R}^N} |u|^qdx=0,
\end{equation*}
which is exactly $(H_{11})$. One can easily see that $I(u_t)=t^sI(u)$ and $J(u_t)=t^sJ(u)$, which also imply $(H_{12})$, while $(H_{13})$ is trivial.

Now we move on to the conditions concerning $f$ and $g$. First note that \eqref{f1} implies
\begin{equation*}
	\int_{\mathbb{R}^N} \frac{F(u_t)}{t^s}dx=t^{\sigma (\ell-q)}\int_{\mathbb{R}^N} \frac{F(t^\sigma u)}{t^{\sigma \ell}}dx=t^{\sigma (\ell-q)}\int_{\{x\in \mathbb{R}^N:\ u(x)\neq 0\}} \frac{F(t^\sigma u)}{t^{\sigma \ell}|u|^{ \ell}}|u|^{ \ell}dx,\ t>0.
\end{equation*}
 Since $\ell>\tilde{q}$, by \cite[Theorem 5]{MR3568051} we know that $\int_{\mathbb{R}^N} |u|^\ell dx$ is bounded away from zero on weakly compact subsets of $E_{rad}^{\alpha,p}(\mathbb{R}^N)\setminus\{0\}$. Since $\ell<q$, it follows by \eqref{f1} and \eqref{f2} that condition $(F_1)$ is satisfied. Condition $(F_2)$ follows from \eqref{f2}, while condition $(F_3)$ follows from \eqref{f1}, $\ell\in (\tilde{q},2^*)$, and \cite[Theorem 5]{MR3568051}.
 
 To prove $(G_1)$, take $\eta\in (\sigma q-N,\sigma r-N)$ and note that
 \begin{equation*}
 	\frac{\int_{\mathbb{R}^N} |u_t|^r}{|t|^{\eta}}dx=|t|^{\sigma r-N-\eta}\int_{\mathbb{R}^N} |u|^rdx,\ t\in \mathbb{R}. 
 \end{equation*}
 The desired conclusion follows from this and \cite[Theorem 4]{MR3568051}. $(G_2)$ is clear, while $(G_3)$ follows from
 \begin{equation*}
 		\frac{\int_{\mathbb{R}^N} |u_t|^rdx}{t^{\sigma q-N}}=t^{\sigma(r-q)}\int_{\mathbb{R}^N} |u|^rdx,\ t>0
 \end{equation*}
 and \cite[Theorem 5]{MR3568051}. $(G_4)$ follows from \cite[Theorem 4]{MR3568051}. 
 
 By Lemma \ref{PS}, the functional $\Phi$ satisfies the \PS{} condition and so we can apply Theorem \ref{thmlinking} to conclude the proof.

\end{proof}

\newpage
{\bf Acknowledgement.}
This work was completed while the second author held a post-doctoral position at Florida Institute of Technology, Melbourne, United States of
America, supported  by CNPq/Brazil under Grant 201334/2024-0.

\def\cprime{$''$}

\end{document}